\documentclass[a4paper,11 pt]{amsart}
\usepackage{amsfonts}
\usepackage{amssymb}
\usepackage[utf8]{inputenc}
\usepackage{amsmath}
\usepackage{pdflscape}
\usepackage{graphicx,xcolor}
\setcounter{MaxMatrixCols}{30}
%TCIDATA{OutputFilter=latex2.dll}
%TCIDATA{Version=5.50.0.2890}
%TCIDATA{LastRevised=Monday, December 02, 2013 11:27:14}
%TCIDATA{<META NAME="GraphicsSave" CONTENT="32">}
%TCIDATA{<META NAME="SaveForMode" CONTENT="1">}
%TCIDATA{BibliographyScheme=Manual}
%BeginMSIPreambleData
%\providecommand{\U}[1]{\protect\rule{.1in}{.1in}}
%EndMSIPreambleData
\vfuzz2pt
\hfuzz2pt
\usepackage[colorlinks=true, linkcolor=red, citecolor=blue]{hyperref}
\usepackage[]{epsfig}
\usepackage[]{pstricks}
\usepackage{tikz}

\newcommand{\R}{\mathbb{R}}

\newcommand{\re}{{\rm Re }}
\newcommand{\im}{{\rm Im }}

\newtheorem{theorem}{Theorem}[section]

\newtheorem{lemma}[theorem]{Lemma}

\newtheorem{problem}[theorem]{Problem}

\theoremstyle{definition}

\newtheorem{remark}[theorem]{Remark}

\theoremstyle{remark}

\newtheorem{assumption}{Assumption}

\makeatletter
\@namedef{subjclassname@2020}{\textup{2020} Mathematics Subject Classification}
\makeatother

\newcommand{\remove}[1]{ }

\def\R{\mathbb R}
\def\be{\begin{equation}}
\def\ee{\end{equation}}
\def\ba{\begin{eqnarray}}
\def\ea{\end{eqnarray}}

\setlength{\oddsidemargin}{ 0.0 in} \setlength{\parindent}{ 24pt}
\setlength{\evensidemargin}{ 0.0 in} \setlength{\parindent}{ 24pt}
\setlength{\textheight} {9.6 in} \setlength{\textwidth}{ 6.6 in}
\setlength{\topmargin}{ -.4 in}

\numberwithin{equation}{section}

\begin{document}
\title[Biharmonic Schrödinger equation: stabilization results]
{\bf Infinite memory effects on the stabilization of a Biharmonic Schr\"odinger equation}

\author[Capistrano--Filho]{R. A. Capistrano--Filho*}
\thanks{*Corresponding author.}
\address{Departamento de Matem\'atica,  Universidade Federal de Pernambuco (UFPE), 50740-545, Recife (PE), Brazil.}
\email{roberto.capistranofilho@ufpe.br}

\author[de Jesus]{I. M.  de Jesus}
\address{Departamento de Matem\'atica,  Universidade Federal de Pernambuco (UFPE), 50740-545, Recife (PE), Brazil and Instituto de Matemática, Universidade Federal de Alagoas (UFAL), Maceió-AL, Brazil.}
\email{isadora.jesus@im.ufal.br; isadora.jesus@ufpe.br}

\author[Gonzalez Martinez]{V.  H. Gonzalez Martinez}
\address{Departamento de Matem\'atica,  Universidade Federal de Pernambuco (UFPE), 50740-545, Recife (PE), Brazil.}
\email{victor.martinez@ufpe.br}

\subjclass[2020]{35B40, 35B45} 
\keywords{Biharmonic Schr\"odinger equation,  Well-posedness,  Infinite memory, Stabilization}

\begin{abstract}
This paper deals with the stabilization of the linear Biharmonic Schrödinger equation in an $n$-dimensional open bounded domain under Dirichlet-Neumann boundary conditions considering three infinite memory terms as damping mechanisms. We show that depending on the smoothness of initial data and the arbitrary growth at infinity of the kernel function, this class of solution goes to zero with a polynomial decay rate like $t^{-n}$ depending on assumptions about the kernel function associated with the infinite memory terms.
\end{abstract}

%\date{\today}
\maketitle

\thispagestyle{empty}

%***********************************************
 \normalsize

\section{Introduction}\label{Sec0}
\subsection{Problem setting} Fourth-order nonlinear Schr\"odinger equation (4NLS) or biharmonic cubic nonlinear Schr\"odinger equation 
\begin{equation}
\label{fourtha}
i\partial_ty +\Delta y-\Delta^2y=\lambda |y|^2y,
\end{equation}
has been introduced by Karpman \cite{Karpman} and Karpman and Shagalov \cite{KarSha} to take into account the role of small fourth-order dispersion terms in the propagation of intense laser beams in a bulk medium with Kerr nonlinearity.  Equation \eqref{fourtha} arises in many scientific fields such as quantum mechanics, nonlinear optics, and plasma physics, and has been intensively studied with fruitful references (see \cite{Ben,Karpman,Paus1} and references therein).

Over the past twenty years, equation \eqref{fourtha} has been deeply studied from a different mathematical viewpoint,  including linear settings which can be written generically as 
\begin{equation}\label{Tsa}
i\partial_ty +\alpha\Delta y-\beta\Delta^2y=f,
\end{equation}
with $\alpha, \beta \geq 0$ and different types of boundary conditions. For example, considering the problem \eqref{Tsa} several authors treated this equation,  see,  for instance, \cite{AkReSa,Peng,tsutsumi,WenChaiGuo1, WenChaiGuo,zheng} and the references therein.
%\begin{equation}\label{Ts}
%	\begin{cases}
%	i\partial_ty +\Delta y-\Delta^2y=f & t \in \mathbb{R}, ~ x \in \mathbb{T}\\
%	u(0,x)=u_0(x), & x \in \mathbb{T}	
%	\end{cases}
%\end{equation}
%where $\mathbb{T}=\mathbb{R} / 2 \pi \mathbb{Z}$, Tsutsumi showed in \cite{tsutsumi} the well-posedness using a Strichartz estimate for problem \eqref{Ts}.
Inspired by these results for the linear problem associated with the 4NLS, a mathematical viewpoint problem is to study the well-posedness and stabilization for solutions of the system \eqref{Tsa} in an appropriate framework. 

So, consider the equation \eqref{Tsa} when $\alpha=\beta=1$ in a $n$-dimensional open bounded subset of $\mathbb{R}^n$.  Our goal is to consider an initial boundary value problem (IBVP) associated with \eqref{Tsa} when the source term $f$ is viewed as an infinite memory term:
$$f=-(-1)^ji\int_0^\infty f(s)\Delta^{j} y(x,t-s)ds.$$
Thus, the goal of  this manuscript is to deal with the following system
\begin{equation}
\label{eq1}
\begin{cases}
i\partial_t y(x,t)+\Delta y(x,t)-\Delta^2 y(x,t){\displaystyle +(-1)^ji\int_0^\infty f(s)\Delta^{j} y(x,t-s)ds=0}, & (x,t)\in \Omega\times \R_+,\\
 y(x,t)=\nabla y(x,t)=0,& (x,t)\in \Gamma\times \R_+^*,\\
 y(x,-t)=y_0(x,t),\ &(x,t)\in \Omega\times\R_+,
\end{cases}
\end{equation}
where $j\in\{0,1,2\}$,  $\Omega\subset \R^n$ is a $n$-dimensional open bounded domain with a smooth boundary $\Gamma, $ and $f:\R_+:=[0,\infty)\rightarrow \R$ is the kernel (or relaxation) function.  We point out that for each $j$ the memory term present in \eqref{eq1} is modified.

In \eqref{eq1}, the memory kernel $f$ satisfies the following assumptions:
\begin{assumption}\label{R1}
Consider $f\in C^2(\R_+)$.  For some positive constant $c_0,$ we have the following conditions
\begin{equation} \label{eq2} 
f'<0,\quad 0\leq f''\leq -c_0f',\quad f(0)>0 \quad \text{and} \quad \lim_{s\rightarrow\infty}f(s)=0.
\end{equation} 
\end{assumption}

Under the Assumption \ref{R1},  let us introduce the following energy functionals associated with the solutions of \eqref{eq1} 
\begin{equation}\label{energy}
E_j(t)=\dfrac{1}{2}\left(\|y\|^2+\int_0^\infty g(s)\|\Delta^\frac{j}{2}\eta^t\|^2ds\right),\
\end{equation}
with $j\in\{0,1,2\}$ and  $g=-f^{\prime}$, so $g \in C^1\left(\mathbb{R}_{+}\right), g$ is non-negative and
$$
g_0:=\int_0^{\infty} g(s) d s=f(0) \in \mathbb{R}_{+}^* . 
$$
It is worth mentioning that the abuse of notation $\Delta^\frac{j}{2}$ in \eqref{energy} means the identity operator for $j=0$,  the $\nabla$ operator for $j=1$ and the laplacian operator for $j=2$.

Therefore,  taking into account the action of the infinite memory term in \eqref{eq1}, the following issue will be addressed in this article:
\begin{problem}\label{p1}
Does $E(t)\longrightarrow0$, as $t\to\infty$? If so, can we provide a decay rate?
\end{problem}

It should be noted that the answer to the above question is crucial in the understanding of the behavior of the solutions to the fourth-order Schrödinger system when it is subject to an infinite memory term. In other words:
\begin{problem}\label{p2}
Are the solutions to our problem stable despite the action of the memory term?  If yes, then how robust is the stabilization property of the solutions?
 \end{problem}

\subsection{Historical background} Distributed systems with memory have a long history and have been first introduced in viscoelasticity by Maxwell,  Boltzmann, and Volterra \cite{Maxwell,Bolt,Bolt1, Volterra}.  In the context of heat processes with finite dimension speed, these systems have been introduced by Cattaneo \cite{Cattaneo} (a previous work of Maxwell had been forgotten). 

 In our context,  to our knowledge, there is no result considering the system \eqref{eq1} in $n$--dimensional case.  However, considering the fourth-order Schr\"odinger system 
\begin{equation}
\label{fourthlin}
i\partial_tu +\Delta^2u=0,
\end{equation}
there are interesting results in the sense of control problems in a bounded domain of $\mathbb{R}$ or $\mathbb{R}^n$ and, more recently, on a periodic domain $\mathbb{T}$ and manifolds, which we will summarize below.

%The well-posedness and existence of solutions for different domains have been shown (see, for instance, \cite{CaCaGa,Kwak,Paus1,Ozsari,Tzvetkov, WenChaiGuo1}) using the Fourier restriction method, energy method, forcing boundary operators, Laplace transform, harmonic analysis, Fokas method, etc. 
%
%Fibich \textit{et al.} \cite{FiIlPa} worked on various properties of the equation in the sub-critical regime, with part of their analysis relying on very interesting numerical developments.
%
%Recently Natali and Pastor \cite{NataliPastor}, considered the fourth-order dispersive cubic nonlinear Schr\"odinger equation on the line with mixed dispersion. They proved the orbital stability, in the $H^2(\mathbb{R})$--energy space by constructing a suitable Lyapunov function. Considering the equation \eqref{fourtha} on the circle, Oh and Tzvetkov  \cite{Tzvetkov}, showed that the mean-zero Gaussian measures on Sobolev spaces $H^s (\mathbb{T})$, for $s>\frac{3}{4}$, are quasi-invariant under the flow.

The first result about the exact controllability of the linearized fourth order Schr\"odinger equation \eqref{fourthlin} on a bounded domain $\Omega$ of $\mathbb{R}^n$ is due to Zheng and  Zhongcheng in \cite{zz}. In this work, using an $L^2$--Neumann boundary control, the authors proved that the solution is exactly controllable in $H^s(\Omega)$, $s=-2$, for an arbitrarily small time. They used Hilbert Uniqueness Method (HUM) (see, for instance, \cite{DolRus1977,lions1}) combined with the multiplier techniques to get the main result of the article. More recently,  in \cite{zheng}, Zheng proved a global Carleman estimate for the fourth-order Schr\"odinger equation posed on a finite domain. The Carleman estimate is used to prove the Lipschitz stability for an inverse problem associated with the fourth-order Schr\"odinger system.

Still, on control theory Wen \textit{et al.} in two works \cite{WenChaiGuo1, WenChaiGuo}, studied well-posedness and control problems related to the equation \eqref{fourthlin} on a bounded domain of $\mathbb{R}^n$, for $n\geq2$. In \cite{WenChaiGuo1}, they considered the Neumann boundary controllability with collocated observation. With this result in hand, the stabilization of the closed-loop system under proportional output feedback control holds. Recently, the same authors, in \cite{WenChaiGuo}, gave positive answers when considering the equation with hinged boundary by either moment or Dirichlet boundary control and collocated observation, respectively.

To get a general outline of the control theory already done for the system \eqref{fourthlin}, two interesting problems were studied recently by Aksas and Rebiai \cite{AkReSa} and Gao \cite{Peng}: Uniform stabilization and stochastic control problem, in a smooth bounded domain $\Omega$ of $\mathbb{R}^n$ and on the interval $I=(0,1)$ of $\mathbb{R}$, respectively. In the first work, by introducing suitable dissipative boundary conditions, the authors proved that the solution decays exponentially in $L^2(\Omega)$ when the damping term is effective on a neighborhood of a part of the boundary. The results are established by using multiplier techniques and compactness/uniqueness arguments. Regarding the second work, the author showed Carleman estimates for forward and backward stochastic fourth order Schr\"odinger equations which provided the proof of the observability inequality,  unique continuation property, and, consequently,  the exact controllability for the forward and backward stochastic system associated with \eqref{fourthlin}.

Recently,  the first author \cite{CaCa} showed the global stabilization and exact controllability  properties of the 4NLS
\begin{equation}\label{fourthC}
\begin{cases}
i\partial_{t}u +\partial_{x}^{2}u-\partial_x^4u
=\lambda |u|^2u +f(x,t),& (x,t)\in \mathbb{T}\times \mathbb{R},\\
u(x,0)=u_0(x),& x\in \mathbb{T},\end{cases}
\end{equation}
on a periodic domain $\mathbb{T}$ with internal control supported on an arbitrary sub-domain of $\mathbb{T}$. More precisely, by certain properties of propagation of compactness and regularity in Bourgain spaces, for the solution of the associated linear system, the authors proved that system \eqref{fourthC} is globally exponentially stabilizable, considering $f(x,t)=-ia^2(x)u$. This property together with the local exact controllability ensures that 4NLS is globally exactly controllable on $\mathbb{T}$.

Lastly,  the first author showed in  \cite{CaPa} the global controllability and stabilization properties for the fractional Schr\"odinger equation on $d$-dimensional compact Riemannian manifolds without boundary $(M,g)$, 
\begin{equation}\label{prob-diss1}
                \begin{cases}
i \partial_{t} u  + \Lambda^{\sigma}_{g}u  + P'(|u|^{2})u - a(x)(1 - \Delta_{g})^{-\frac{\sigma}{2}}a(x)\partial_{t}u =0, & \text{ on } M\times \mathbb{R}_{+}, \\
u(x,0)=u_0(x),& x\in M.
        \end{cases}
\end{equation}
Under the suitable assumption of the damping term $a(x)$ they proved their result using microlocal analysis, being precise, they can prove propagation of regularity which together with the so-called Geometric Control Condition and Unique Continuation Property, shows the main results of the article. Is important to mention that when $\sigma=4$ they have the equation \eqref{fourthlin}.

\subsection{Notations}
Before presenting the main result let us give some notations and definitions.  In what follows, the variables $x, t$,  and $s$ will be suppressed, except when there is ambiguity and, throughout this article, $C$ will denote a constant that can be different from one step to the next in the proofs presented here.  We will use the notations $\langle \cdot ,\cdot \rangle$ and $\|\cdot\|$ to denote, respectively, the complex inner product in $L^2(\Omega)$ and its associated standard norm, namely
$$
\langle u,v\rangle=\re\left(\int_\Omega u(x)\overline{v}(x)dx\right)\mbox{\ and \ }\|u\|=\left(\int_\Omega |u(x)|^2dx\right)^\frac{1}{2}.
$$
Now, consider the following approximation
$$
\eta^t(x,s)=\int_{t-s}^{t} y(x,\tau)d\tau \mbox{\ and \ }\eta^0(x,s)=\int_{0}^{s} y_0(x,\tau)d\tau, ~ x\in \Omega, s,t\in\R_+.
$$
This approximation ensures that $\eta^t$ satisfies
\begin{equation}
\label{eq3}
\begin{cases}
\partial_t\eta^t(x,s)+\partial_s\eta^t(x,s)=y(x,t), &  x\in \Omega,\ s,t\in\R_+,\\
\eta^t(x,s)=0,& x\in \Gamma, \ s,t\in\R_+,\\
\eta^t(x,0)=0,& x\in \Omega,\ t\in\R_+.\\
\end{cases}
\end{equation}

In order to express the memory integral in \eqref{eq1} in terms of $\eta^t,$ we will denote $g:=-f'.$ Thus, according to \eqref{eq2}, we have $g\in C^1(\R_+)$ and
\begin{equation}
\label{eq4} 
g>0,\quad 0\leq -g'\leq c_0g ,\quad g_0=\int_0^\infty g(s)ds=f(0)>0 
\end{equation}  
and
\begin{equation}
\label{eq5} 
\lim_{s\rightarrow\infty}g(s)=0.
\end{equation}
Now on,  rewrite \eqref{eq1} into 
\begin{equation}
\label{eq7}
{\displaystyle i\partial_t y(x,t)+\Delta y(x,t)-\Delta^2 y(x,t)+i(-1)^j\int_0^\infty g(s)\Delta^j \eta^t(x,s)ds=0}.
\end{equation}  
Define the following sets 
\begin{equation*}
H_j=\begin{cases}
L^2(\Omega),&\mbox{ if } j=0,\\
H_0^1(\Omega),&\mbox{ if } j=1,\\
H_0^2(\Omega),&\mbox{ if } j=2,
\end{cases}
\end{equation*}
with natural inner product 
$$\langle v,w\rangle_{H_j}=\begin{cases}
\langle v(s),w(s)\rangle &\mbox{ if } j=0,\\
\langle \nabla v(s),\nabla w(s)\rangle &\mbox{ if } j=1,\\
\langle \Delta v(s),\Delta w(s)\rangle &\mbox{ if } j=2
\end{cases}$$
and norm
$$\|v\|_{H_j}=\begin{cases}
\| v(s)\| &\mbox{ if } j=0,\\
\| \nabla v(s)\| &\mbox{ if } j=1,\\
\|\Delta v(s)\| &\mbox{ if } j=2,
\end{cases}$$
respectively\footnote{Here $\langle \nabla v(s),\nabla w(s)\rangle:=\sum_{k=1}^{n}\langle \partial_{x_k} v ,\partial_{x_k} w\rangle$ and $ \|\nabla v(s)\|^2=\sum_{k=1}^{n}\|\partial_{x_k}v(s)\|^2.$}.  Consider
\begin{equation*}
 U=(y, \eta^t)^T \mbox{ and } U_0(x,s)=(y_0(x,0),\eta^0(x,s))^T
\end{equation*}
where
\begin{equation*}
y\in L^2(\Omega) \mbox{\ \  and \  \ } \eta^t\in L_j
\end{equation*}
with
\begin{equation*}
L_j=L^2_g(\R_+;H_j):=\left\{v:\R_+\longrightarrow H_j; \int_0^\infty g(s)\|v(s)\|_{H_j}^2ds<+\infty\right\}.
\end{equation*}
Define the energy space as follows
\begin{equation*}
 \mathcal{H}_j=L^2(\Omega)\times L_j,\ j\in\{0,1,2\},
\end{equation*}
with inner product and norm
\begin{equation*}
\langle (v_1,v_2),(w_1,w_2)\rangle_{\mathcal{H}_j}=\langle v_1,w_1\rangle+\langle v_2,w_2\rangle_{L_j}  
\end{equation*}
and
\begin{equation*}
\|(v(s),w(s))\|_{\mathcal{H}_j}=\left(\|v(s)\|^2+\|w(s)\|_{L_j}^2\right)^\frac{1}{2},
\end{equation*}
respectively. Therefore, the systems \eqref{eq1} and \eqref{eq3} can be seen as the following initial value problem (IVP)
\begin{equation}
\label{eq8}
\left\{\begin{array}{rcl}
\partial_t U(t)&=&\mathcal{A}_j U\\
U(0)&=&U_0.
\end{array}\right.
\end{equation}
Here, the operator $\mathcal{A}_j$ is defined by
\begin{equation}
\label{eq9}
\mathcal{A}_j(U)=\left(
\begin{array}{c}
i\Delta y-i\Delta^2 y+(-1)^{j+1}\int_0^\infty g(s)\Delta^j\eta^t(\cdot,s)ds\\
\\
y- \eta^t_s
\end{array}\right)
\end{equation}
with domain 
\begin{equation}\label{eq9a}
D(\mathcal{A}_j)=\{U\in \mathcal{H}_j; \mathcal{A}_j(U)\in\mathcal{H}_j, y\in H^2_0(\Omega), \eta^t(x,0)=0 \}.
\end{equation}

\begin{remark}
	Observe that for the fourth-order Schrödinger equation, the natural domain to be considered is $H_0^2(\Omega)\cap H^4(\Omega)$. However, since we are working with a more general operator, namely operator defined in \eqref{eq9} and \eqref{eq9a}, we need to impose  $\mathcal{A}_j(U)\in \mathcal{H}_j$. However, note that the inclusion below 
\begin{equation*}
	H_0^2(\Omega)\cap H^4(\Omega)\times \{ \eta^t \in L_j: (-1)^{j+1}\int_0^\infty g(s)\Delta^j\eta^t(\cdot,s)ds \in L^2(\Omega), \eta^t(x,0)=0 \} \subset D(\mathcal{A}_j).
\end{equation*}	
is verified. So, the operator $\mathcal{A}_j(U)$ is well-defined.
\end{remark}

\subsection{Main result}
As mentioned, some valuable efforts in the last years focus on the well-posedness and stabilization problem for the fourth-order Schrödinger system.  So, in this article, we present a new way to ensure that, in some sense, the Problems \ref{p1} and \ref{p2} can be solved for the system \eqref{eq1} in $n$-dimensional case.  To do that, we use the ideas contained in \cite{Guesmia}, so additionally to the Assumption \ref{R1} we have also assumed the memory kernel satisfying the following:
\begin{assumption}\label{H1}
Assume there is a positive constant $\alpha_0$ and a strictly convex increasing function $G:\R_+\longrightarrow\R_+$  of class $C^1(\R_+)\cap C^2(\R^*_+)$  satisfying
\begin{equation}
\label{eq20}G(0)=G'(0)=0 \mbox{\ \ and\ \ } \lim_{t\rightarrow \infty} G'(t)=\infty
\end{equation}
such that
\begin{equation}
\label{eq21} g'\leq -\alpha_0g
\end{equation}
or
\begin{equation}
\label{eq22}{\displaystyle \int_0^\infty \dfrac{s^2g(s)}{G^{-1}(-g'(s))}ds+\sup_{s\in\R^+} \dfrac{g(s)}{G^{-1}(-g'(s))}<\infty}.
\end{equation}
Additionally,  when \eqref{eq21} is not verified, we will assume that $y_0$ satisfies, 
\begin{equation}
\label{eq23}\sup_{t\in \R_+}\max_{k\in\{0,\dots, n+1\}} \int_t^\infty \dfrac{g(s)}{G^{-1}(-g'(s))}\left\|\int_0^{s-t}\Delta^\frac{j}{2} \partial_{s}^k y_0(\cdot, \tau)d\tau\right\|^2ds<\infty.
\end{equation}
for $j\in\{0,1,2\}$.
\end{assumption}

The next theorem is the main result of the article. 
\begin{theorem}
\label{teo2}
Assume \eqref{eq4} and that the Assumption \ref{H1} holds. Let  $n\in\mathbb{N}^*,\ U_0\in D(\mathcal{A}_j^{2n})$ when $j=0$, and $U_0\in D(\mathcal{A}_j^{2n+2})$ when $j\in\{1,2\}.$ Thus, there exists positive constants $\alpha_{j,n}$ such that the energy \eqref{energy} associated with \eqref{eq8} satisfies
\begin{equation}
\label{eq24} E_j(t)\leq \alpha_{j,n}G_n\left(\dfrac{\alpha_{j,n}}{t}\right),\quad  t\in\R_+^*,  \ j\in \{0,1,2\}.
\end{equation}
Here,  $G_n$ is defined, recursively, as follows:
\begin{equation}
\label{eq25} G_m\left(s\right)=G_1(sG_{m-1}(s)), m=2,3,\dots, n, \ G_1=G_0^{-1},
\end{equation}
where $G_0(s)=s$ if \eqref{eq21} is verified, and $G_0(s)=sG'(s)$ if \eqref{eq22} holds.
\end{theorem}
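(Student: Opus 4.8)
The plan is to adapt the abstract infinite‑memory machinery of \cite{Guesmia} to the biharmonic Schr\"odinger dynamics \eqref{eq7}, proceeding in three stages: well‑posedness of \eqref{eq8}, a base decay estimate for $n=1$, and an induction on $n$.

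\emph{Stage 1: well-posedness and the dissipation law.} First I would verify, via the Lumer--Phillips theorem, that $\mathcal{A}_j$ from \eqref{eq9}--\eqref{eq9a} generates a $C_0$-semigroup of contractions on $\mathcal{H}_j$. Dissipativity would follow by computing $\langle\mathcal{A}_j U,U\rangle_{\mathcal{H}_j}$: integration by parts in $x$ makes $i\Delta y$ and $-i\Delta^2 y$ skew-symmetric (boundary terms vanishing by $y=\nabla y=0$ on $\Gamma$), the memory term cancels the cross term $\langle y,\eta^t\rangle_{L_j}$, and integration by parts in $s$ (using $\eta^t(\cdot,0)=0$ and \eqref{eq5}) leaves $\langle\mathcal{A}_j U,U\rangle_{\mathcal{H}_j}=\tfrac12\int_0^\infty g'(s)\|\Delta^{j/2}\eta^t(s)\|^2\,ds\le 0$ by \eqref{eq4}. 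For maximality I would solve the resolvent equation $(I-\mathcal{A}_j)U=F$ by first expressing $\eta^t$ in terms of $y$ through the transport equation \eqref{eq3}, then solving the resulting fourth-order elliptic problem for $y$ in $H_0^2(\Omega)$ by Lax--Milgram. This gives, for $U_0\in D(\mathcal{A}_j^k)$, a solution $U=(y,\eta^t)$ of \eqref{eq8} with $\mathcal{A}_j^\ell U\in C(\mathbb{R}_+;\mathcal{H}_j)$ and $\|\mathcal{A}_j^\ell U(t)\|_{\mathcal{H}_j}\le\|\mathcal{A}_j^\ell U_0\|_{\mathcal{H}_j}$ for $0\le\ell\le k$; differentiating \eqref{energy} along \eqref{eq3} then yields the dissipation identity $E_j'(t)=\tfrac12\int_0^\infty g'(s)\|\Delta^{j/2}\eta^t(s)\|^2\,ds\le 0$.

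\emph{Stage 2: base case $n=1$.} For $U_0\in D(\mathcal{A}_j^2)$ — and $D(\mathcal{A}_j^4)$ when $j\in\{1,2\}$, the two extra levels absorbing the derivatives carried by the memory term $\int_0^\infty g\,\Delta^j\eta^t\,ds$ — I would introduce a perturbed functional $\mathcal{L}=NE_j+\mathcal{I}$, with $\mathcal{I}$ a cross term pairing $y$ against a weighted average of $\eta^t$, chosen so that, using \eqref{eq7} and \eqref{eq3}, $\mathcal{I}'(t)$ produces a term $-c\|y(t)\|^2$ modulo quantities controlled by the memory content $\int_0^\infty g\|\Delta^{j/2}\eta^t\|^2\,ds$, the dissipation $-E_j'(t)$, and higher-order remainders kept bounded by $\|\mathcal{A}_j^\ell U(t)\|_{\mathcal{H}_j}\le\|\mathcal{A}_j^\ell U_0\|_{\mathcal{H}_j}$. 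Taking $N$ large I would reach
\[
\mathcal{L}'(t)\le -c\,E_j(t)+c\int_0^\infty g(s)\,\|\Delta^{j/2}\eta^t(s)\|^2\,ds .
\]
As the last term sits inside $E_j$ and is not dominated by $-E_j'$ in general, I would split $\int_0^\infty$ over $\{-g'\ge\alpha_0 g\}$ — on which, under \eqref{eq21}, it is absorbed into $-cE_j'(t)$ — and its complement, where assumption \eqref{eq22}, the strict convexity of $G$, and Jensen's inequality bound the contribution by $G_0\bigl(c(-E_j'(t))\bigr)$ plus a fixed finite quantity coming from the compatibility hypothesis \eqref{eq23} with $k=0$. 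Reabsorbing produces the nonlinear differential inequality $\tfrac{d}{dt}(E_j+\text{corrections})(t)\le -c\,G_0^{-1}\bigl(c\,E_j(t)\bigr)$, which integrates exactly as in \cite{Guesmia} to $E_j(t)\le\alpha_{j,1}G_1(\alpha_{j,1}/t)$ with $G_1=G_0^{-1}$; when \eqref{eq21} holds, $G_0=\mathrm{id}$ and this is the rate $\alpha_{j,1}^2/t$.

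\emph{Stage 3: induction on $n$ and the main obstacle.} Assuming \eqref{eq24} with $G_{m-1}$ for data in $D(\mathcal{A}_j^{2(m-1)})$ (resp.\ $D(\mathcal{A}_j^{2(m-1)+2})$), and given $U_0$ in the $m$-th domain, I would apply the inductive hypothesis to $\mathcal{A}_j^2U$ — which solves the same abstract system with initial datum in the $(m-1)$-th domain — so that the higher energy $E_j^{(2)}(t)$ associated with $\mathcal{A}_j^2U$ decays like $G_{m-1}(\cdot/t)$; the offset $+2$ for $j\in\{1,2\}$ is propagated from Stage 2. Feeding this into the Stage-2 differential inequality, with the remainder now controlled by $E_j^{(2)}(t)$ rather than a constant, and combining with an interpolation-type inequality relating $E_j$, $E_j^{(2)}$, and the dissipation, one optimizes to replace $G_{m-1}(s)$ by $G_m(s)=G_1\bigl(s\,G_{m-1}(s)\bigr)$, which is precisely \eqref{eq25}; here \eqref{eq23} is used for $k=0,\dots,n+1$ to keep all remainders uniformly bounded along the iteration. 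I expect the main difficulty to be in Stage 2: producing the correct nonlinear differential inequality, because the memory term is itself part of the energy, so the splitting of the memory integral, the use of the convex conjugate of $G$, and the bookkeeping that keeps every error term either reabsorbable or bounded through \eqref{eq23} is where the real work lies — the semigroup generation and the Schr\"odinger multiplier estimates being comparatively routine.
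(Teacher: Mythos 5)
Your Stage 1 matches the paper's appendix, and Stage 3 is in the right spirit (the paper applies the inductive hypothesis to $U_t$ and $U_{tt}$ and simply integrates the base inequality over $[T,2T]$ to produce $G_{n+1}(s)=G_1(sG_n(s))$; no interpolation or optimization step is needed). The genuine gap is in Stage 2. The differential inequality you aim for, $\mathcal{L}'(t)\le -c\,E_j(t)+c\int_0^\infty g(s)\|\Delta^{j/2}\eta^t\|^2ds$ with $\mathcal{L}$ equivalent to $E_j$, cannot be derived for this system: after absorbing the memory integral it would yield exponential decay when \eqref{eq21} holds ($G_0=\mathrm{id}$), whereas the theorem asserts only $t^{-n}$, and the obstruction is structural. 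The observability step (multiplying \eqref{eq7} by $\overline{y}$) produces the cross term $\int_\Omega(\re(y_t)\im(y)-\re(y)\im(y_t))dx$, and the only available way to control it is to write $y_t=\frac{1}{g_0}\int_0^\infty g(s)(\eta^t_{tt}+\eta^t_{st})ds$, which brings in $\int_0^\infty g(s)\|\Delta^{j/2}\eta^t_{tt}\|^2ds$ and $-E_{j,1}'(t)$, i.e.\ quantities at the level of the second time derivative of the solution. These are bounded by $E_{j,2}(0)$ but do not decay, so they cannot be ``kept bounded'' and then discarded or absorbed into $-cE_j(t)$: a constant remainder on the right-hand side of a differential inequality destroys any decay. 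The paper's resolution is that every such remainder enters either as $-E_{j,k}'(t)$ (the derivative of a bounded non-increasing quantity, hence with finite total integral) or, via the key convexity lemma \eqref{eq49}, as $d_{j,k}G_0(\epsilon_0E_j(t))$, which is reabsorbed by taking $\epsilon_0$ small. What one actually obtains is the pointwise bound $G_0(\epsilon_0E_j(t))\le -c_2\left(E_j'(t)+E_{j,1}'(t)+E_{j,2}'(t)\right)$, which upon integration gives only $tG_0(\epsilon_0E_j(t))\le c_2\left(E_j(0)+E_{j,1}(0)+E_{j,2}(0)\right)$; this is precisely where the $t^{-1}$ (not exponential) base rate and the hypotheses $U_0\in D(\mathcal{A}_j^{2})$, resp.\ $D(\mathcal{A}_j^{4})$, come from.

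A second, related inconsistency: your nonlinear ODE $\frac{d}{dt}(E_j+\text{corrections})\le -c\,G_0^{-1}(cE_j)$ does not integrate to $E_j(t)\le\alpha\,G_1(\alpha/t)$ with $G_1=G_0^{-1}$; it would give $E_j(t)\le C\,G_0(C/t)$, i.e.\ a rate \emph{faster} than the theorem's when \eqref{eq22} holds. The convex-conjugate argument must place $G_0(\epsilon_0E_j)$, not its inverse, on the dissipative side; the paper achieves this by weighting the memory integrals with $G'(\epsilon_0E_j)=G_0(\epsilon_0E_j)/(\epsilon_0E_j)$, applying Young's inequality $s_1s_2\le G(s_1)+G^*(s_2)$ with the history bounds $M_{j,k}(t,s)$ of \eqref{eq45}--\eqref{eq48}, and invoking \eqref{eq22}--\eqref{eq23} to keep the resulting suprema finite. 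Your sketch of this step (Jensen plus ``a fixed finite quantity'') would again leave a non-decaying additive term in the inequality.
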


\begin{remark} Let us give some remarks about the Assumption \ref{H1}. 
\begin{enumerate}
\item[i.] Thanks to the relation \eqref{eq22}, we have that \eqref{eq23} is valid,  for example,  if
$$\|\Delta^\frac{j}{2} \partial_s^k y_0\|^2,\ k=0,1,\dots, n+1,$$
is bounded with respect to $s.$

\item[ii.] There are many class of function $g$ satisfying \eqref{eq4},  \eqref{eq5},  \eqref{eq20}, \eqref{eq21}, \eqref{eq22}, and \eqref{eq23}. For example, those that converge exponentially to zero as
\begin{equation}\label{g_1}
g_1(s):=d_1e^{-q_1s}
\end{equation}
or those that converge at a slower rate, like
\begin{equation}\label{g_2}
g_2(s):=d_2(1+s)^{-q_2}
\end{equation}
with $d_1,\ q_1, d_2>0$, and $q_2>3.$ Additionally,  we point out that conditions \eqref{eq4} and \eqref{eq21} are satisfied for $g_1$ defined by \eqref{g_1} with $c_0=\alpha_0=q_1,$ since
 $$g_1'(s)=-q_1d_1e^{-q_1s}=-q_1g_1(s).$$
However, the conditions \eqref{eq4} and \eqref{eq22} are satisfied for $g_2$ given by \eqref{g_2} with
$c_0=q_2$ and $G(s)=s^p$, for $p>\dfrac{q_2+1}{q_2-3}.$
\end{enumerate}
\end{remark}

\begin{remark}\label{R1a}Now,  we will present the following remarks related to the main result of the article.
\begin{itemize}
\item[i.] When \eqref{eq21} is verified,  note that $G_n(0)=0,$ so \eqref{eq24} implies
\begin{equation}
\label{eq26}\lim_{t\rightarrow \infty}E_j(t)\leq \alpha_{j,1}G_1\left(\dfrac{\alpha_{j,1}}{t}\right)=0.
\end{equation}
Since we have that $D(\mathcal{A}_j^{2})$ is dense in $\mathcal{H}_j$, when $j=0$, and $D(\mathcal{A}_j^{4})$ is dense in $\mathcal{H}_j$ when $j=1,2$ (see Lemma \ref{lem301} in \ref{sec2}), we have that \eqref{eq26} is valid for any $U_0\in \mathcal{H}_j.$ Therefore, in this case,  \eqref{eq25} gives $G_n(s)=s^n$ and from \eqref{eq24} we get
\begin{equation}
\label{eq27}
E_j(t)\leq \alpha_{j,n}\left(\dfrac{\alpha_{j,n}}{t}\right)^n=\dfrac{(\alpha_{j,n})^{n+1}}{t^n}=\beta_{j,n}t^{-n},
\end{equation}
showing that the energy \eqref{energy} associated with the solutions of the system \eqref{eq8} have a polynomial decay rate.

\item[ii.]  Given \eqref{eq22} verified,  the relation of \eqref{eq24} is weaker than the previous case.  For example, when $g=g_2$ defined by \eqref{g_2}, we see that $G(s)=s^p$ with $p>\dfrac{q_2+1}{q_2-3}$ satisfies the Assumption \ref{H1}.  Moreover,  $$G_0(s)=sG'(s)=ps^p,\ G_1(s)=\sqrt[p]{\dfrac{s}{p}},$$
$$ G_2(s)=G_1(sG_1(s))=\sqrt[p]{\dfrac{s\sqrt[p]{\dfrac{s}{p}}}{p}}=\left(\dfrac{s}{p}\right)^{\frac{1}{p}+\frac{1}{p^2}},$$
$$ G_3(s)=G_1(sG_2(s))=\sqrt[p]{\dfrac{s}{p}\left(\dfrac{s}{p}\right)^{\frac{1}{p}+\frac{1}{p^2}}}=\left(\dfrac{s}{p}\right)^{\frac{1}{p}+\frac{1}{p^2}+\frac{1}{p^3}}$$
and so, 
$$ G_n(s)=\left(\dfrac{s}{p}\right)^{\frac{1}{p}+\frac{1}{p^2}+\cdots+\frac{1}{p^n}}=\left(\dfrac{s}{p}\right)^{p_n},$$
where $p_n=\sum_{m=1}^n p^{-m}=\frac{1}{p}+\frac{1}{p^2}+\cdots+\frac{1}{p^n}.$ Therefore,  the energy \eqref{energy} associated with the solutions of the system \eqref{eq8}  satisfies
$$E_j(t)\leq \alpha_{j,n}\left(\frac{1}{p}\dfrac{\alpha_{j,n}}{t} \right)^{p_n}=\beta_{j,n}t^{-p_n},$$
with $\beta_{j,n}=\alpha_{j,n}\left(\dfrac{\alpha_{j,n}}{p}\right)^{p_n}>0,$ showing that the decay rate of \eqref{eq24} is arbitrarily near of $t^{-n},$ when $p\rightarrow 1$, that is,  $p_n\rightarrow n$ when $q_2\rightarrow \infty.$ 
\end{itemize}
\end{remark}

\subsection{Novelty and structure of the work}
Among the main novelties introduced in this article, we give an affirmative answer to the Problems \ref{p1} and \ref{p2}, providing a further step toward a better understanding of the stabilization problem for the linear system associated with \eqref{fourtha} in the $n$-dimensional case.  Here, we have used the multipliers method and some arguments devised in \cite{Guesmia}. 

Since we are working with a mixed dispersion we can consider three different memory kernels acting as damping control to stabilize equation \eqref{eq1} in contrast to \cite{CaCa}, for example, where interior damping is required and no memory is taken into consideration, in a one-dimensional case.  Moreover,  if we also compare with the linear Schrödinger equation (see e.g. \cite{Marcelo}) we have more kernels acting to decay the solution of the equation \eqref{eq1} since we have more regularity with the mixed dispersion, which is a gain due the bi-laplacian operator.

Additionally of this,  recently, using another approach,  the authors in \cite{CaPa} showed that the system \eqref{prob-diss1} is stable, however considering a damping mechanism and some important assumptions such as the Geometric Control Condition (GCC) and Unique Continuation Property (UCP).  Here, we are not able to prove that the solutions decay exponentially, however, with the approach of this article, the (GCC) and (UCP) are not required.  The drawback is that we only provide that the energy of the system \eqref{eq1}, with memory terms, decays in some sense as explained in the Remark \ref{R1a}.

A natural issue is how to deal with the 4NLS system given in \eqref{fourtha}.  The main point is that we are not able to use Strichartz estimates or Bourgain spaces to obtain more regularity for the solution of the problem with memory terms,  therefore,  Theorem \ref{teo2} for the system \eqref{fourtha} with memory terms remains open.

Now, let us present the outline of our paper.  In Section \ref{sec3} we prove a series of lemmas that are paramount to prove the main result of the article.  With the previous section in hand,  Theorem \ref{teo2} is shown in Section \ref{sec4}.  Finally, for sake of completeness, in Appendix \ref{sec2}, we present the existence of a solution for the system \eqref{eq8} in the energy space $\mathcal{H}_j$.

\section{Auxiliary results}\label{sec3} 
In this section, we will give some auxiliary lemmas that help us to prove the main result of the article. In this way, the first result shows identities for the derivatives of $E_j$ given by \eqref{energy}.
\begin{lemma} Suppose the Assumption \ref{R1}. Then,  the energy functional satisfies
\begin{equation}
\label{eq28} E'_j(t)={\displaystyle\dfrac{1}{2}\int_0^\infty g'(s)\|\Delta^\frac{j}{2}\eta^t\|^2ds, \ j\in\{0,1,2\}.}
\end{equation}
\end{lemma}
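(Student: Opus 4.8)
\emph{Plan of proof.} The statement is an energy identity, so the plan is to differentiate $E_j$ in time along a solution of \eqref{eq8} and to use the two equations governing the pair $(y,\eta^t)$: the rewritten Schr\"odinger equation \eqref{eq7} (equivalently the first component of \eqref{eq9}) and the transport system \eqref{eq3}. I would carry out the computation for $U_0\in D(\mathcal{A}_j)$, which is the natural class making $t\mapsto E_j(t)$ differentiable and all the manipulations below legitimate. First I would differentiate the ``elastic'' term, $\frac{d}{dt}\big(\tfrac12\|y\|^2\big)=\langle \partial_t y,y\rangle$, and insert $\partial_t y=i\Delta y-i\Delta^2 y+(-1)^{j+1}\int_0^\infty g(s)\Delta^j\eta^t\,ds$. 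The two dispersive contributions vanish: integrating by parts with $y=\nabla y=0$ on $\Gamma$ gives $\int_\Omega \Delta y\,\bar y\,dx=-\|\nabla y\|^2$ and $\int_\Omega \Delta^2 y\,\bar y\,dx=\|\Delta y\|^2$, both real, so $\langle i\Delta y,y\rangle=\langle -i\Delta^2 y,y\rangle=0$. Only the memory term $(-1)^{j+1}\int_0^\infty g(s)\langle \Delta^j\eta^t,y\rangle\,ds$ survives, and $j$ integrations by parts in $x$ — whose boundary terms vanish since $\eta^t=0$ on $\Gamma$, and, when $j=2$, also $\nabla\eta^t=0$ on $\Gamma$ (differentiate $\eta^t(x,s)=\int_{t-s}^t y(x,\tau)\,d\tau$ and use $\nabla y=0$ on $\Gamma$) — rewrite it as $-\int_0^\infty g(s)\langle \Delta^{j/2}\eta^t,\Delta^{j/2}y\rangle\,ds$.

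Next I would differentiate the memory part, $\frac{d}{dt}\Big(\tfrac12\int_0^\infty g(s)\|\Delta^{j/2}\eta^t\|^2\,ds\Big)=\int_0^\infty g(s)\langle \Delta^{j/2}\eta^t,\Delta^{j/2}\partial_t\eta^t\rangle\,ds$, and replace $\partial_t\eta^t=y-\partial_s\eta^t$ from the first line of \eqref{eq3}. The $y$-contribution equals $+\int_0^\infty g(s)\langle \Delta^{j/2}\eta^t,\Delta^{j/2}y\rangle\,ds$ and cancels exactly the term surviving from the previous step. The $\partial_s\eta^t$-contribution equals $-\int_0^\infty g(s)\langle \Delta^{j/2}\eta^t,\partial_s\Delta^{j/2}\eta^t\rangle\,ds=-\tfrac12\int_0^\infty g(s)\,\partial_s\|\Delta^{j/2}\eta^t\|^2\,ds$, and integrating by parts in $s$ gives $\tfrac12\int_0^\infty g'(s)\|\Delta^{j/2}\eta^t\|^2\,ds-\tfrac12\big[g(s)\|\Delta^{j/2}\eta^t\|^2\big]_{s=0}^{s=\infty}$. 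The endpoint at $s=0$ is zero because $\eta^t(\cdot,0)=0$ (third line of \eqref{eq3}); the endpoint at $s=\infty$ is also zero; collecting the surviving terms gives precisely \eqref{eq28}.

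The hard part will be the vanishing of the boundary term $g(s)\|\Delta^{j/2}\eta^t(\cdot,s)\|^2$ as $s\to\infty$, and, relatedly, the justification of differentiating under the $\int_0^\infty ds$ sign. I would handle this using that for $U_0\in D(\mathcal{A}_j)$ both $\eta^t$ and $\partial_s\eta^t=y(\cdot,t-s)$ lie in the appropriate spaces, the representation $\eta^t(\cdot,s)=\int_0^t y(\cdot,\tau)\,d\tau+\eta^0(\cdot,s-t)$ for $s\ge t$, and the facts from \eqref{eq4} that $g\ge 0$ is integrable and nonincreasing, which together control the tail and supply a dominating function; everything else is the routine integration by parts outlined above.
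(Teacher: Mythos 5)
Your proof is correct and follows essentially the same route as the paper: since $E_j(t)=\tfrac12\|U(t)\|_{\mathcal H_j}^2$, the paper simply invokes the dissipativity computation \eqref{eq10} for $\langle\mathcal A_jU,U\rangle_{\mathcal H_j}$, which is exactly the calculation you carry out by hand (cancellation of the skew-adjoint dispersive terms, cancellation of the memory coupling against the $y$-term of $\partial_t\eta^t=y-\partial_s\eta^t$, and the integration by parts in $s$ using $\eta^t(\cdot,0)=0$). Your explicit attention to the vanishing of the boundary term at $s=\infty$ and to differentiation under the integral is a justification the paper leaves implicit, but it is not a different method.
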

\begin{proof}
Observe that \eqref{eq28} is a direct consequence of \eqref{eq10}, and the result follows. 
\end{proof}

Next,  we will give a $H^1$-estimate for the solution of \eqref{eq7}.
\begin{lemma}
There exist positive constants $c_{k,j}, \ j\in\{0,1,2\}$ and $k\in\{1,2\}$ such that the following inequality
\begin{equation}
\label{eq30}
\|\nabla y\|^2\leq c_{1,j}\|\eta^t\|^2_{L_j}+c_{2,j}\int_{\Omega}\left[\re(y_t)\im(y)-\im(y_t)\re(y)\right]dx,
\end{equation}
holds.
\end{lemma}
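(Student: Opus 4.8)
The plan is to test the equation \eqref{eq7} against a suitable multiplier and extract the gradient term. Since the damping acts through $\eta^t$ rather than directly through $y$, the natural strategy is to multiply \eqref{eq7} by $\bar y$ (or rather, take the $L^2$ inner product), integrate over $\Omega$, use the Dirichlet--Neumann boundary conditions $y=\nabla y=0$ on $\Gamma$ to integrate by parts, and take real and imaginary parts. First I would write $i\partial_t y + \Delta y - \Delta^2 y + i(-1)^j\int_0^\infty g(s)\Delta^j\eta^t\,ds = 0$, pair it with $y$ in $L^2(\Omega)$ (with the complex inner product $\langle\cdot,\cdot\rangle$ as defined in the Notations), and observe that $\langle \Delta y, y\rangle = -\|\nabla y\|^2$ after integration by parts (boundary terms vanish since $y=0$ on $\Gamma$), while $\langle \Delta^2 y, y\rangle = \|\Delta y\|^2 \geq 0$ (two integrations by parts, using $y=\nabla y=0$ on $\Gamma$). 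The term $\langle i\partial_t y, y\rangle = \re\left(i\int_\Omega \partial_t y\,\bar y\right) = \int_\Omega[\im(y_t)\re(y) - \re(y_t)\im(y)]\,dx$ up to sign, which is exactly the second term on the right of \eqref{eq30} (modulo the constant $c_{2,j}$ and a sign to be tracked carefully).

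The remaining piece is the memory contribution $\langle i(-1)^j\int_0^\infty g(s)\Delta^j\eta^t\,ds, y\rangle$. For $j=0$ this is $\re\left(i\int_0^\infty g(s)\langle \eta^t,y\rangle\,ds\right)$, which I would bound by Cauchy--Schwarz and Young's inequality: $|\langle \eta^t(s), y\rangle| \leq \frac{1}{2\epsilon}\|\eta^t(s)\|^2 + \frac{\epsilon}{2}\|y\|^2$, then integrate against $g(s)$ and use $g_0 = \int_0^\infty g(s)\,ds < \infty$; the $\|y\|^2$ term is absorbed using the Poincaré inequality $\|y\|^2 \leq C\|\nabla y\|^2$ with $\epsilon$ chosen small, and what survives is a multiple of $\|\eta^t\|_{L_j}^2$. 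For $j=1,2$ one integrates by parts in $x$ to move the derivatives in $\Delta^j\eta^t$ onto $y$: e.g. for $j=2$, $\langle \Delta^2\eta^t, y\rangle = \langle \Delta\eta^t, \Delta y\rangle$, which is controlled by $\|\Delta\eta^t\|\,\|\Delta y\| = \|\eta^t\|_{H_2}\|\Delta y\|$; here the $\|\Delta y\|^2$ that appears after Young's inequality is absorbed by the good term $\|\Delta y\|^2$ coming from $\langle\Delta^2 y,y\rangle$ — this is where the bi-Laplacian regularity is genuinely used. For $j=1$, $\langle \Delta\eta^t,y\rangle = -\langle \nabla\eta^t,\nabla y\rangle$, and the $\|\nabla y\|^2$ produced must be absorbed into the left-hand side with a small coefficient. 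Collecting all terms and renaming constants yields \eqref{eq30}.

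The main obstacle I anticipate is the absorption bookkeeping in the case $j=1$: the memory term naturally produces $\|\nabla y\|^2$, which is precisely the quantity we are trying to estimate, so one must ensure the coefficient from Young's inequality can be made strictly smaller than the coefficient $1$ multiplying $\|\nabla y\|^2$ on the left (coming from $-\langle\Delta y,y\rangle$); this works because $\int_0^\infty g(s)\,ds = g_0$ is finite and fixed, so choosing the Young parameter appropriately leaves a positive net coefficient. A secondary technical point is justifying the integration by parts and the interchange of $\int_0^\infty$ with $\int_\Omega$, which is legitimate on $D(\mathcal{A}_j)$ by the definition \eqref{eq9a} (in particular $\eta^t(x,0)=0$ and the integrability built into $L_j$), and then extends by density. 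I would also double-check signs: the claimed inequality \eqref{eq30} has $+c_{2,j}\int_\Omega[\re(y_t)\im(y)-\im(y_t)\re(y)]\,dx$, so the sign of the real part of $\langle i\partial_t y,y\rangle$ must be matched, possibly after multiplying the whole pairing by $-1$.
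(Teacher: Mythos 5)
Your proposal is correct and follows essentially the same route as the paper: pair \eqref{eq7} with $\overline{y}$, take real parts so that $\langle\Delta y,y\rangle=-\|\nabla y\|^2$ and $\langle\Delta^2y,y\rangle=\|\Delta y\|^2$, bound the memory term by Cauchy--Schwarz and Young after moving derivatives onto $y$, and then absorb via Poincar\'e for $j=0$, directly into $\|\nabla y\|^2$ for $j=1$, and into the bi-Laplacian term $\|\Delta y\|^2$ for $j=2$. The sign bookkeeping you flag works out exactly as in the paper's identity $-\im\int_\Omega y_t\overline{y}\,dx=\int_\Omega\left[\re(y_t)\im(y)-\re(y)\im(y_t)\right]dx$.
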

\begin{proof}
We use the multipliers method to prove \eqref{eq30}.  First, multiplying the equation  \eqref{eq7} by $\overline{y}$, integrating over $\Omega$ and taking the real part we get  
\begin{equation}
\label{eq31}
\begin{split}
 -\im\left(\int_{\Omega}y_t\overline{y}dx\right)-\|\nabla y\|^2-\|\Delta y\|^2+{\re}\left((-1)^ji\int_0^\infty g(s)\int_{\Omega}\Delta^j\eta^t\overline{y}dxds\right)=0,
\end{split}
\end{equation}
taking into account the boundary conditions in \eqref{eq1} and  \eqref{eq3},  for $y(t,\cdot)\in H_0^2(\Omega), $ for all $t\in \R^+.$ 

Note that the last term of the left-hand side of \eqref{eq31} can be bounded using the generalized Young's Inequality giving
\begin{equation}\label{eq32a}
\begin{split}
\left| (-1)^ji\int_0^\infty g(s)\int_{\Omega}\Delta^j\eta^t\overline{y}dxds\right|=& \left|i\langle\eta^t,y\rangle_{L_j}\right|\leq \|\eta^t\|_{L_j}\|y\|_{L_j}\leq\epsilon\|y\|_{L_j}^2 +C(\epsilon)\|\eta^t\|_{L_j}^2\\
=&\underbrace{g_1\epsilon}_{=:\delta}\|\Delta ^{\frac{j}{2}}y\|^2+C(\epsilon)\|\eta^t\|_{L_j}^2=\delta\|\Delta ^{\frac{j}{2}}y\|^2+C(\delta)\|\eta^t\|_{L_j}^2,
\end{split}
\end{equation}
for any $\delta>0$.  Additionally of that, the first term of the left-hand side of \eqref{eq31} can be viewed as 
\begin{equation}
\label{eq32}{\displaystyle \im\left(\int_{\Omega}y_t\overline{y}dx\right)=\int_{\Omega}\left(\re(y)\im(y_t)-\re(y_t)\im(y)\right)dx}. 
\end{equation}
So,  replacing \eqref{eq32a} and \eqref{eq32} in \eqref{eq31},  yields
\begin{equation}
\label{eq33}
\begin{split}
\|\nabla y\|^2\leq& \int_{\Omega}\left(\re(y_t)\im(y)-\re(y)\im(y_t)\right)dx-\|\Delta y\|^2+\delta\|\Delta ^{\frac{j}{2}}y\|^2+C(\delta)\|\eta^t\|_{L_j}^2\\
 \leq& \int_{\Omega}\left(\re(y_t)\im(y)-\re(y)\im(y_t)\right)dx+\delta\|\Delta ^{\frac{j}{2}}y\|^2+C(\delta)\|\eta^t\|_{L_j}^2.
\end{split}
\end{equation}
We now split the remainder of the proof into three cases. 

\vspace{0.1cm}
\noindent\textbf{Case 1.} $j=0$
\vspace{0.1cm}

Poincar\'e inequality in \eqref{eq33} gives
\begin{equation}
\label{eq34}
\|\nabla y\|^2\leq \int_{\Omega}\left(\re(y_t)\im(y)-\re(y)\im(y_t)\right)dx+\delta c_\ast\|\nabla y\|^2+C(\delta)\|\eta^t\|_{L_j}^2.
\end{equation}
Picking $\delta=\dfrac{1}{2c_\ast}>0$ in \eqref{eq34} yields
$$
\dfrac{1}{2}\|\nabla y\|^2\leq \int_{\Omega}\left(\re(y_t)\im(y)-\re(y)\im(y_t)\right)dx +C(\delta)\|\eta^t\|_{L_j}^2,
$$
showing \eqref{eq30} with  $\ c_{1,0}=2C(\delta)$ and $c_{2,0}=2.$

\vspace{0.1cm}
\noindent\textbf{Case 2.} $j=1$
\vspace{0.1cm}

In this case \eqref{eq33} is giving by
$$
\|\nabla y\|^2\leq \int_{\Omega}\left(\re(y_t)\im(y)-\re(y)\im(y_t)\right)dx+\delta\|\nabla y\|^2+C(\delta)\|\eta^t\|_{L_j}^2
$$
and taking  $\delta=\frac{1}{2}>0$, the inequality  \eqref{eq30} holds with $c_{1,1}=2C(\delta)$ and $c_{2,1}=2.$ 

\vspace{0.1cm}
\noindent\textbf{Case 1.} $j=2$
\vspace{0.1cm}

Finally,  just take any $\delta>0$ such that $\delta<1.$ Therefore, using \eqref{eq33} we get  \eqref{eq30}  for $c_{1,2}=C(\delta)$ and $c_{2,2}=1,$ achieving the result. 
\end{proof}

We need now define the following higher-order energy functionals
\begin{equation}\label{enrgy_h}
E_{j,k}(t)=\dfrac{1}{2}\left\|\partial^k_t U\right\|^2_{\mathcal{H}_j},
\end{equation}
for $U_0\in D(\mathcal{A}_{j}^{2n+2})$ in the case when $j=1,2$, and $U_0\in D(\mathcal{A}_0^{2n})$ with $n\in \mathbb{N}^*$.  This is possible thanks to the Theorem \ref{teo1} in \ref{sec2} that guarantees $U\in C^k(\R_+;D(\mathcal{A}_j^{4-k}))$
for $k\in\{1,2,3,4\}$ when $j\in\{1,2\}$, and that $U\in C^k(\R_+;D(\mathcal{A}_j^{2-k}))$ for $k\in\{1,2\}$ when $j=0$. Additionally of that,  the linearity of the operator $\mathcal{A}_j$ together with  \eqref{eq28} gives
\begin{equation}
\label{eq29}
E'_{j,k}(t)={\displaystyle\dfrac{1}{2}\int_0^\infty g'(s)\|\Delta^\frac{j}{2}\partial_t^k\eta^t\|^2ds.}
\end{equation}
With this in hand, let us control the last term of the right-hand side of \eqref{eq30} in terms of the $E'_{j,1}$ and the $L_j$-norms of the $\Delta^\frac{j}{2}\eta^{t}_{tt}$.
\begin{lemma}
The following estimate is valid
\begin{equation}
\label{eq35}
\int_{\Omega}\left(\re(y_t)\im(y)-\re(y)\im(y_t)\right)dx\leq \epsilon\|\nabla y\|^2+c_\epsilon \int_{0}^\infty g(s)\|\Delta^\frac{j}{2} \eta^t_{tt}\|^2ds-c_\epsilon E_{j,1}'(t),
\end{equation}
for any $\epsilon>0.$ 
\end{lemma}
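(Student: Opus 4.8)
By \eqref{eq32}, the left-hand side of \eqref{eq35} equals $-\im\int_\Omega y_t\overline y\,dx$, so this is the quantity to estimate. The terms $\eta^t_{tt}$ and $\eta^t_t$ appearing on the right are not going to come from the equation \eqref{eq7}, but from the transport relation in \eqref{eq3}. Differentiating $\partial_t\eta^t+\partial_s\eta^t=y$ once more in $t$ gives $\eta^t_{tt}+\partial_s\eta^t_t=y_t$; multiplying by $g(s)$, integrating over $s\in\R_+$, and integrating by parts the term $\int_0^\infty g(s)\partial_s\eta^t_t\,ds$ then yields the identity
\begin{equation*}
g_0\,y_t=\int_0^\infty g(s)\,\eta^t_{tt}(\cdot,s)\,ds+\int_0^\infty\bigl(-g'(s)\bigr)\,\eta^t_t(\cdot,s)\,ds .
\end{equation*}
The boundary terms of this integration by parts vanish because $\eta^t_t(x,0)=0$ (differentiate $\eta^t(x,0)=0$ in $t$) and because $g(s)\to0$ as $s\to\infty$ by \eqref{eq5} while $\|\eta^t_t(\cdot,s)\|=\|y(\cdot,t)-y(\cdot,t-s)\|$ stays bounded thanks to the boundedness of the energy.

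The plan is then to pair this identity with $\overline y$ in $L^2(\Omega)$, take the imaginary part, and use Fubini — legitimate because $\int_0^\infty g\|\eta^t_{tt}\|^2\,ds$ and $\int_0^\infty(-g')\|\eta^t_t\|^2\,ds$ are finite for data as regular as required in \eqref{enrgy_h} — to rewrite $-\im\int_\Omega y_t\overline y\,dx$ as $-\tfrac1{g_0}$ times the sum of $\int_0^\infty g(s)\,\im\!\bigl(\int_\Omega\eta^t_{tt}\overline y\,dx\bigr)ds$ and $\int_0^\infty(-g'(s))\,\im\!\bigl(\int_\Omega\eta^t_t\overline y\,dx\bigr)ds$. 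Each of these I would bound by Cauchy--Schwarz in $x$ (so that $\bigl|\im\int_\Omega\eta^t_{tt}\overline y\,dx\bigr|\le\|\eta^t_{tt}\|\,\|y\|$, and likewise for $\eta^t_t$), then by Cauchy--Schwarz in $s$ against the weights $g\,ds$ and $(-g')\,ds$, using $\int_0^\infty g\,ds=g_0$ and $\int_0^\infty(-g')\,ds=g(0)$, and finally Young's inequality, to reach, for every $\delta>0$,
\begin{equation*}
-\im\int_\Omega y_t\overline y\,dx\le\delta\|y\|^2+\frac1{2\delta g_0}\int_0^\infty g(s)\|\eta^t_{tt}\|^2\,ds+\frac{g(0)}{2\delta g_0^{2}}\int_0^\infty\bigl(-g'(s)\bigr)\|\eta^t_t\|^2\,ds .
\end{equation*}
To conclude, I would use Poincaré's inequality $\|y\|^2\le c_\ast\|\nabla y\|^2$ together with the Poincaré-type bound $\|v\|\le\kappa_j\|v\|_{H_j}=\kappa_j\|\Delta^{j/2}v\|$ on $H_j$ (with $\kappa_0=1$) applied to the slices $\eta^t_{tt}(\cdot,s)$ and $\eta^t_t(\cdot,s)$, and then identify $\int_0^\infty(-g')\|\Delta^{j/2}\eta^t_t\|^2\,ds=-2E'_{j,1}(t)$ from \eqref{eq29}. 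Taking $\delta=\epsilon/c_\ast$ and letting $c_\epsilon$ be the larger of the two constants thereby produced in front of the nonnegative integral $\int_0^\infty g\|\Delta^{j/2}\eta^t_{tt}\|^2\,ds$ and of $-E'_{j,1}(t)$ gives \eqref{eq35}, uniformly in $j\in\{0,1,2\}$.

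The one step that genuinely requires care is the derivation of the displayed identity: substituting $y=\eta^t_t+\eta^t_s$ directly into $-\im\int_\Omega y_t\overline y\,dx$, or integrating that relation against $g$, only reproduces a tautology, so the new information enters precisely through differentiating the transport relation a \emph{second} time in $t$ and trading the $\partial_s$-derivative for the $-g'$ weight by integration by parts. Verifying that the boundary terms in $s$ vanish and that Fubini applies is where the sign and decay conditions \eqref{eq4}--\eqref{eq5} on $g$ and the regularity of $U_0$ (ensuring $\eta^t_t,\eta^t_{tt}\in L_j$ for a.e.\ $s$) are used; everything after that is routine Cauchy--Schwarz, Young, and Poincaré.
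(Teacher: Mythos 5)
Your proof is correct and follows essentially the same route as the paper: both derive the identity $g_0\,y_t=\int_0^\infty g(s)\,\eta^t_{tt}\,ds+\int_0^\infty(-g'(s))\,\eta^t_t\,ds$ from the transport equation \eqref{eq3} (the paper performs the $s$-integration by parts one step later, inside \eqref{eq36}, but the computation is identical), then apply Cauchy--Schwarz and Young with the weights $g\,ds$ and $(-g')\,ds$, pass from $\|y\|$, $\|\eta^t_{tt}\|$, $\|\eta^t_t\|$ to $\|\nabla y\|$, $\|\Delta^{j/2}\eta^t_{tt}\|$, $\|\Delta^{j/2}\eta^t_t\|$ via Poincar\'e, and identify the last integral with $-2E'_{j,1}(t)$ through \eqref{eq29}. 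Your added justification of the vanishing boundary terms and of Fubini is a point the paper leaves implicit, but it does not change the argument.
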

\begin{proof}
Differentiating \eqref{eq3} with respect to $t$,  multiplying the result by $g(s)$, and integrating on $[0,\infty)$ we have
$$
y_t=\dfrac{1}{g_0}\int_0^\infty g(s)\left(\eta^t_{tt}(s,x)+\eta^t_{st}(s,x)\right)ds,
$$
taking into account the third relation in \eqref{eq4}.  So, we get
\begin{equation}\label{rr}
\begin{split}
\mathcal{I}:=\int_{\Omega}\left(\re(y_t)\im(y)-\re(y)\im(y_t)\right)dx=& \int_{\Omega}\re\left(\dfrac{1}{g_0}\int_0^\infty g(s)\left(\eta^t_{tt}+\eta^t_{st}\right)ds\right)\im(y)dx\\&-\int_{\Omega}\re(y)\im\left(\dfrac{1}{g_0}\int_0^\infty g(s)\left(\eta^t_{tt}+\eta^t_{st}\right)ds\right)dx.
\end{split}
\end{equation}

Now, let us bound the right-hand side of \eqref{rr}.  To do that, reorganize the terms of the (RHS) and note that 
\begin{equation}
\label{eq36}
\begin{split}
(RHS) =& \dfrac{1}{g_0}\int_0^\infty g(s)\int_{\Omega}\left(\re\left(\eta^t_{tt}\right)\im(y)-\re(y)\im\left(\eta^t_{tt}\right)\right)dxds\\
& +\dfrac{1}{g_0}\int_0^\infty(- g'(s))\int_{\Omega}\left(\re\left(\eta^t_{t}\right)\im(y)-\re(y)\im\left(\eta^t_{t}\right)\right)dxds\\
\leq &\dfrac{1}{g_0}\int_0^\infty g(s)\int_{\Omega}|y||\eta^t_{tt}|dxds+\dfrac{1}{g_0}\int_0^\infty (-g'(s))\int_{\Omega}|y||\eta^t_{t}|dxds\\
\leq& \dfrac{1}{g_0}\int_0^\infty g(s)\|y\|\|\eta^t_{tt}\|ds+\dfrac{1}{g_0}\int_0^\infty (-g'(s))\|y\|\|\eta^t_{t}\|ds.
\end{split}
\end{equation}
The generalized Young inequality gives for any $\delta>0$ that
$$\|y\|\|\eta^t_{t}\|\leq \delta\|y\|^2+C_\delta \|\eta^t_{t}\|^2 $$
and
$$\|y\|\|\eta^t_{tt}\|\leq \delta\|y\|^2+C_\delta \|\eta^t_{tt}\|^2. $$
Substituting both inequalities into \eqref{eq36} yields
\begin{equation}
\label{eq36a}
\begin{split}
(RHS) \leq&\delta\dfrac{1}{g_0}\int_0^\infty g(s)\|y\|^2ds +C_\delta\dfrac{1}{g_0}\int_0^\infty g(s) \|\eta^t_{tt}\|^2ds\\
&+\delta\dfrac{1}{g_0}\int_0^\infty (-g'(s))\|y\|^2ds+C_\delta\dfrac{1}{g_0}\int_0^\infty (-g'(s))\|\eta^t_{t}\|^2ds.
\end{split}
\end{equation}
Now replacing \eqref{eq36a} into \eqref{rr} we have
\begin{equation}\label{rrr}
\begin{split}
\mathcal{I}\leq& \delta\dfrac{1}{g_0}\int_0^\infty g(s)\|y\|^2ds +C_\delta\dfrac{1}{g_0}\int_0^\infty g(s) \|\eta^t_{tt}\|^2ds\\
&+\delta\dfrac{1}{g_0}\int_0^\infty (-g'(s))\|y\|^2ds+C_\delta\dfrac{1}{g_0}\int_0^\infty (-g'(s))\|\eta^t_{t}\|^2ds\\
=& \delta\left(1+\dfrac{1}{g_0}\left(\int_0^\infty (-g'(s))ds\right)\right)\|y\|^2 +C_\delta\dfrac{1}{g_0}\int_0^\infty g(s) \|\eta^t_{tt}\|^2ds \\
&+C_\delta\dfrac{1}{g_0}\int_0^\infty (-g'(s))\|\eta^t_{t}\|^2ds\\
\leq& c_{*}\delta\left(1+\dfrac{1}{g_0}\left(\int_0^\infty (-g'(s))ds\right)\right)\|\nabla y\|^2 +c_{**}C_\delta\dfrac{1}{g_0}\int_0^\infty g(s) \|\Delta^\frac{j}{2}\eta^t_{tt}\|^2ds\\
& +c_{**}C_\delta\dfrac{1}{g_0}\int_0^\infty (-g'(s))\|\Delta^\frac{j}{2}\eta^t_{t}\|^2ds,
\end{split}
\end{equation}
thanks to Poincar\'e inequality.  Here,  
\begin{equation}\label{c*}
c_{**}=\begin{cases}
1, &\mbox{ if } j=0,\\
c_{*}, &\mbox{ if } j=1,\\
c_{*}^2, &\mbox{ if } j=2,
\end{cases}
\end{equation}
and $c_*>0$ is the Poincar\'e constant.  Finally,  taking $k=1$ in \eqref{eq29}, we see that \eqref{rrr} leads to \eqref{eq35} with $\epsilon={\displaystyle  c_{*}\delta\left(1+\dfrac{1}{g_0}\left(\int_0^\infty (-g'(s))ds\right)\right)}$ and $c_\epsilon=c_{**}C_\delta\dfrac{1}{g_0}$. 
\end{proof}

Now, just in the case $j=2,$ we need an estimate $H^2$-for the solution of \eqref{eq7} similar to the estimate \eqref{eq30}. This estimate is reported in the following lemma.
\begin{lemma}
When $j=2,$ there exist positive constants $c_{k,2}, \ k\in\{1,2\},$ such that the following inequality
\begin{equation}
\label{eq37}
\|\Delta y\|^2\leq c_{1,2}\|\eta^t\|^2_{L_2}+c_{2,2}\int_{\Omega}\left[\re(y_t)\im(y)-\im(y_t)\re(y)\right]dx
\end{equation}
holds.
\end{lemma}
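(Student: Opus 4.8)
The plan is to rerun the computation that produced \eqref{eq30}, but now keeping the term $\|\Delta y\|^2$ and discarding $\|\nabla y\|^2$, specializing to $j=2$ from the start. So I would begin again from the identity \eqref{eq31}, obtained by multiplying \eqref{eq7} by $\overline y$, integrating over $\Omega$ and taking the real part, all boundary contributions vanishing since $y(t,\cdot)\in H_0^2(\Omega)$ and $\eta^t$ vanishes on $\Gamma$. For $j=2$ it reads $-\im\!\left(\int_\Omega y_t\overline y\,dx\right)-\|\nabla y\|^2-\|\Delta y\|^2+\re\!\left(\int_0^\infty g(s)\int_\Omega\Delta^2\eta^t\,\overline y\,dx\,ds\right)=0$, so, since $\|\nabla y\|^2\ge 0$, one immediately gets $\|\Delta y\|^2\le -\im\!\left(\int_\Omega y_t\overline y\,dx\right)+\left|\int_0^\infty g(s)\int_\Omega\Delta^2\eta^t\,\overline y\,dx\,ds\right|$.

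Next I would estimate the right-hand side exactly as in the proof of \eqref{eq30}. The memory term is controlled by \eqref{eq32a} with $j=2$: it is bounded by $|i\langle\eta^t,y\rangle_{L_2}|\le\|\eta^t\|_{L_2}\|y\|_{L_2}\le\delta\|\Delta y\|^2+C(\delta)\|\eta^t\|_{L_2}^2$ for every $\delta>0$, where one uses that $y$ is independent of $s$, so that $\|y\|_{L_2}^2=g_0\|\Delta y\|^2$ by the third relation in \eqref{eq4}. The first term is rewritten via \eqref{eq32} as $-\im\!\left(\int_\Omega y_t\overline y\,dx\right)=\int_\Omega\bigl(\re(y_t)\im(y)-\re(y)\im(y_t)\bigr)dx$. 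Substituting these two bounds into the previous inequality yields $\|\Delta y\|^2\le\int_\Omega\bigl(\re(y_t)\im(y)-\im(y_t)\re(y)\bigr)dx+\delta\|\Delta y\|^2+C(\delta)\|\eta^t\|_{L_2}^2$.

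It then remains only to absorb the $\delta\|\Delta y\|^2$ term into the left-hand side, which is legitimate for any $\delta<1$; choosing $\delta=\tfrac12$ gives $\tfrac12\|\Delta y\|^2\le\int_\Omega\bigl(\re(y_t)\im(y)-\im(y_t)\re(y)\bigr)dx+C(\tfrac12)\|\eta^t\|_{L_2}^2$, that is, \eqref{eq37} with $c_{1,2}=2\,C(\tfrac12)$ and $c_{2,2}=2$. I do not expect any genuine difficulty here: the statement is a direct variant of the one giving \eqref{eq30}, and it is in fact slightly simpler in the case $j=2$, since $\|y\|_{L_2}$ is dominated directly by $\|\Delta y\|$ — no Poincaré inequality is needed, unlike Case~1 there — so the absorption step is immediate; the only point to watch is the bookkeeping of the constants.
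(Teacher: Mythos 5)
Your proposal is correct and follows essentially the same route as the paper: multiply \eqref{eq7} by $\overline y$, take the real part, discard the nonnegative term $\|\nabla y\|^2$, bound the memory term by $\delta\|\Delta y\|^2+C(\delta)\|\eta^t\|_{L_2}^2$ via the generalized Young inequality, and absorb with $\delta=\tfrac12$, yielding the same constants $c_{1,2}=2C(\delta)$ and $c_{2,2}=2$. Your observation that no Poincar\'e inequality is needed here (since $\|y\|_{L_2}^2=g_0\|\Delta y\|^2$ directly) matches the structure of the paper's argument.
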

\begin{proof}
Multiplying equation \eqref{eq7} by  $\overline{y},$ integrating over we have
$$
0= i\int_{\Omega}y_t\overline{y}dx-\|\nabla y\|^2-\|\Delta y\|^2+i\int_0^\infty g(s)\int_{\Omega}\Delta^2\eta^t\overline{y}dxds,
$$
since the boundary conditions \eqref{eq1} and \eqref{eq3} are verified and  $y(t,\cdot)\in H_0^2(\Omega)$ for all $t\in \R^+$.  Now,  taking the real part in the previous equality give us
\begin{equation}
\label{eq38}
 -\im\left(\int_{\Omega}y_t\overline{y}dx\right)-\|\nabla y\|^2-\|\Delta y\|^2+\re\left(i\int_0^\infty g(s)\int_{\Omega}\Delta^2\eta^t\overline{y}dxds\right)=0.
\end{equation}
Taking into account that
\begin{equation}\label{eq39}
\im\left(\int_{\Omega}y_t\overline{y}dx\right)=\int_{\Omega}\left(\re(y)\im(y_t)-\re(y_t)\im(y)\right)dx
\end{equation}
and, thanks to the generalized Young inequality, we have that
\begin{equation}\label{eq39a}
\begin{split}
\left|i\int_0^\infty g(s)\int_{\Omega}\Delta^2\eta^t\overline{y}dxds\right|=&|i\langle \eta^t,y\rangle_{L_2}|\leq \|y\|_{L_2}\|\eta^t\|_{L_2}
\\
\leq&\underbrace{g_1\epsilon}_{=:\delta}\|\Delta y\|^2+C(\epsilon)\|\eta^t\|_{L_2}^2
=\delta\|\Delta y\|^2+C(\delta)\|\eta^t\|_{L_2}^2.
\end{split}
\end{equation}
We get,  putting \eqref{eq39} and \eqref{eq39a} into \eqref{eq38},  that
\begin{equation}
\label{eq40}
\|\Delta y\|^2\leq \int_{\Omega}\left(\re(y_t)\im(y)-\re(y)\im(y_t)\right)dx+\delta\|\Delta y\|^2+C(\delta)\|\eta^t\|_{L_2}^2.
\end{equation}

Finally,  pick $\delta=\dfrac{1}{2}>0$  in \eqref{eq40}  to get
$$
\dfrac{1}{2}\|\Delta y\|^2\leq \int_{\Omega}\left(\re(y_t)\im(y)-\re(y)\im(y_t)\right)dx+C(\delta)\|\eta^t\|_{L_2}^2,
$$
showing \eqref{eq37} with $\ c_{1,2}=2C(\delta)$ and $c_{2,2}=2.$
\end{proof}

As a consequence of \eqref{eq35},  the last term of the right-hand side of \eqref{eq37} can be bounded as follows. 
\begin{lemma}
For any $\epsilon>0,$ we have the following inequality
\begin{equation}
\label{eq41}
 \int_{\Omega}\left(\re(y_t)\im(y)-\re(y)\im(y_t)\right)dx\leq  \epsilon\|\Delta y\|^2+c_\epsilon \int_{0}^\infty g(s)\|\Delta \eta^t_{tt}\|^2ds -c_\epsilon E_{2,1}'(t).
\end{equation}
\end{lemma}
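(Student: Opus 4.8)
The plan is to mimic the proof of Lemma with estimate \eqref{eq35}, now differentiating the auxiliary system \eqref{eq3} once in $t$ and applying the multiplier method in a way that brings out $\|\Delta y\|^2$ rather than $\|\nabla y\|^2$. First I would start from the identity
$$
y_t=\dfrac{1}{g_0}\int_0^\infty g(s)\left(\eta^t_{tt}(\cdot,s)+\eta^t_{st}(\cdot,s)\right)ds,
$$
which is exactly what was derived in the proof of \eqref{eq35} (integrating the $t$-derivative of \eqref{eq3} against $g(s)$ over $\R_+$ and using $g_0=\int_0^\infty g\,ds$). Substituting this into the sesquilinear quantity
$$
\mathcal{I}:=\int_{\Omega}\left(\re(y_t)\im(y)-\re(y)\im(y_t)\right)dx
$$
and integrating by parts in $s$ on the term containing $\eta^t_{st}$ (the boundary terms vanish because $\eta^t(\cdot,0)=0$ and $g(s)\to 0$, $g'$ integrable), I split $(RHS)$ into a piece weighted by $g(s)$ involving $\eta^t_{tt}$ and a piece weighted by $-g'(s)$ involving $\eta^t_t$, just as in \eqref{eq36}.

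Next I would estimate $|y||\eta^t_{tt}|$ and $|y||\eta^t_t|$ pointwise, integrate over $\Omega$, and apply the generalized Young inequality with parameter $\delta>0$ to obtain
$$
\mathcal{I}\leq \delta\left(1+\dfrac{1}{g_0}\int_0^\infty(-g'(s))ds\right)\|y\|^2+C_\delta\dfrac{1}{g_0}\int_0^\infty g(s)\|\eta^t_{tt}\|^2ds+C_\delta\dfrac{1}{g_0}\int_0^\infty(-g'(s))\|\eta^t_t\|^2ds.
$$
The only difference from the earlier lemma is that here I do not want to pass to $\|\nabla y\|^2$ via Poincaré but rather to $\|\Delta y\|^2$. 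Since $y(t,\cdot)\in H^2_0(\Omega)$, two applications of the Poincaré inequality give $\|y\|^2\leq c_*\|\nabla y\|^2\leq c_*^2\|\Delta y\|^2$, and similarly $\|\eta^t_{tt}\|^2\leq c_*^2\|\Delta\eta^t_{tt}\|^2$ and $\|\eta^t_t\|^2\leq c_*^2\|\Delta\eta^t_t\|^2$ (this is precisely the $j=2$ case of the constant $c_{**}=c_*^2$ from \eqref{c*}). This bounds the $\|y\|^2$ term by a multiple of $\|\Delta y\|^2$ and turns the memory integrals into the $L_2$-type quantities $\int_0^\infty g(s)\|\Delta\eta^t_{tt}\|^2ds$ and $\int_0^\infty(-g'(s))\|\Delta\eta^t_t\|^2ds$.

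Finally, recognizing via \eqref{eq29} with $k=1$ and $j=2$ that
$$
\dfrac{1}{2}\int_0^\infty g'(s)\|\Delta\eta^t_t\|^2ds=E_{2,1}'(t),
$$
so that $\int_0^\infty(-g'(s))\|\Delta\eta^t_t\|^2ds=-2E_{2,1}'(t)$, I can collect terms and set
$$
\epsilon=c_*^2\,\delta\left(1+\dfrac{1}{g_0}\int_0^\infty(-g'(s))ds\right),\qquad c_\epsilon=c_*^2\,C_\delta\dfrac{1}{g_0},
$$
which yields \eqref{eq41} for any $\epsilon>0$ (every value of $\epsilon$ being attainable by choosing $\delta$ suitably, since $\int_0^\infty(-g'(s))ds=g(0)<\infty$ is finite by \eqref{eq4}). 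The only point requiring a little care is the integration by parts in $s$ and the justification that the boundary contributions at $s=0$ and $s=\infty$ vanish; this is the same regularity bookkeeping already used in the proof of \eqref{eq35}, so no genuinely new obstacle arises — the lemma is really the $j=2$, $\|\Delta y\|^2$ analogue of the earlier $\|\nabla y\|^2$ estimate.
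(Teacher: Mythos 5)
Your argument is correct and is essentially the paper's: the paper simply takes the already-proved estimate \eqref{eq35} (whose derivation you reproduce step by step for $j=2$) and applies the Poincar\'e inequality $\|\nabla y\|^2\leq c_*\|\Delta y\|^2$, valid for $y\in H^2_0(\Omega)$, to the first term on its right-hand side, rescaling $\epsilon$. Your re-derivation, ending with $\|y\|^2\leq c_*^2\|\Delta y\|^2$ and the identification of $E_{2,1}'(t)$ via \eqref{eq29}, is the same computation unfolded rather than cited, so there is nothing to object to.
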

\begin{proof}
Using Poncar\'e inequality in the first term of the right-hand side of \eqref{eq35}, and taking $\epsilon=c^*\epsilon,$ where  $c^*$ is the Poincar\'e constant, the result follows.
\end{proof}

The next lemma combines the previous one to get an estimate in $H_j$ for solutions of \eqref{eq7}.
\begin{lemma}\label{lemma6}
There exist a positive constant $c=c(j)>0$, with $j\in \{1,2\}$ such that 
\begin{equation}\label{eq43}
 \|\Delta^\frac{j}{2} y\|^2\leq c(E_j(0))+E_{j,1}(0)+E_{j,2}(0)).
\end{equation}
\end{lemma}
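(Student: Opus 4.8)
The plan is to feed the auxiliary estimates already established into one another so as to obtain a closed inequality for $\|\Delta^{\frac{j}{2}}y\|^2$ at a fixed time $t$, and then to bound every remaining quantity by the \emph{initial} energies using monotonicity. First I would record that $g'\le 0$ by \eqref{eq4}, so that formulas \eqref{eq28} and \eqref{eq29} give $E_j'\le 0$ and $E_{j,k}'\le 0$; hence $E_j(t)\le E_j(0)$ and $E_{j,k}(t)\le E_{j,k}(0)$ for all $t\ge0$ and $k\in\{1,2\}$. I would also note three elementary consequences of the definitions of $E_j$, $E_{j,1}$, $E_{j,2}$ and of the space $L_j$: namely $\|\eta^t\|_{L_j}^2\le 2E_j(t)$, the identity $\int_0^\infty g(s)\|\Delta^{\frac{j}{2}}\eta^t_{tt}\|^2\,ds=\|\eta^t_{tt}\|_{L_j}^2\le 2E_{j,2}(t)$, and — using $0\le -g'\le c_0 g$ from \eqref{eq4} — the bound
\[
-E_{j,1}'(t)=\tfrac12\int_0^\infty\bigl(-g'(s)\bigr)\|\Delta^{\frac{j}{2}}\eta^t_t\|^2\,ds\le \tfrac{c_0}{2}\|\eta^t_t\|_{L_j}^2\le c_0\,E_{j,1}(t).
\]

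For $j=1$ I would substitute \eqref{eq35} (with $j=1$) into \eqref{eq30} (with $j=1$) and choose $\epsilon=\tfrac{1}{2c_{2,1}}$, so that the term $c_{2,1}\epsilon\|\nabla y\|^2$ is absorbed into the left-hand side, leaving
\[
\tfrac12\|\nabla y\|^2\le c_{1,1}\|\eta^t\|_{L_1}^2+c_{2,1}c_\epsilon\int_0^\infty g(s)\|\nabla\eta^t_{tt}\|^2\,ds-c_{2,1}c_\epsilon\, E_{1,1}'(t).
\]
For $j=2$ I would proceed identically, substituting \eqref{eq41} into \eqref{eq37} and choosing $\epsilon=\tfrac{1}{2c_{2,2}}$ to absorb $c_{2,2}\epsilon\|\Delta y\|^2$. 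In both cases, inserting the three bounds of the previous paragraph and then the monotonicity inequalities $E_j(t)\le E_j(0)$, $E_{j,1}(t)\le E_{j,1}(0)$, $E_{j,2}(t)\le E_{j,2}(0)$ gives $\|\Delta^{\frac{j}{2}}y\|^2\le c(j)\bigl(E_j(0)+E_{j,1}(0)+E_{j,2}(0)\bigr)$, with $c(j)$ an explicit combination of $c_{1,j}$, $c_{2,j}$, $c_\epsilon$ and $c_0$; taking the larger of the two constants yields \eqref{eq43}.

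The only genuinely non-routine point is the presence of the \emph{positive} quantity $-E_{j,1}'(t)$ on the right-hand sides of \eqref{eq35} and \eqref{eq41}: at a fixed time this is not a priori controlled by the data, and it is exactly the structural hypothesis $-g'\le c_0 g$ of \eqref{eq4} that allows one to dominate it by $c_0E_{j,1}(t)\le c_0E_{j,1}(0)$. Everything else — the choice of $\epsilon$, the use of the definitions of the energy spaces, and the monotonicity of the energies — is bookkeeping. The regularity required for $E_{j,1}$, $E_{j,2}$ and for formula \eqref{eq29} to be meaningful is supplied by the standing hypothesis $U_0\in D(\mathcal{A}_j^{2n+2})$ for $j\in\{1,2\}$ together with Theorem \ref{teo1}.
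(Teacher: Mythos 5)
Your proposal is correct and follows essentially the same route as the paper: substitute \eqref{eq35} (resp.\ \eqref{eq41}) into \eqref{eq30} (resp.\ \eqref{eq37}) with $\epsilon=\tfrac{1}{2c_{2,j}}$ to absorb the $\|\Delta^{\frac{j}{2}}y\|^2$ term, then bound $\|\eta^t\|_{L_j}^2$ by $2E_j(t)$, the $\eta^t_{tt}$ integral by $2E_{j,2}(t)$, and $-E_{j,1}'(t)$ by $c_0E_{j,1}(t)$ via $0\le -g'\le c_0g$, finishing with monotonicity of the energies. You have correctly identified the one non-routine point (controlling $-E_{j,1}'(t)$), and your constant matches the paper's $c=\max\{4c_{1,j},\,4c_{2,j}c_\epsilon,\,2c_0c_{2,j}c_\epsilon\}$.
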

\begin{proof}
Pick $\epsilon=\dfrac{1}{2c_{2,j}}$ in \eqref{eq35} and \eqref{eq41} when $j=1$ and $j=2$, respectively. So we have
$$\int_{\Omega}\left(\re(y_t)\im(y)-\re(y)\im(y_t)\right)dx\leq  \dfrac{1}{2c_{2,j}}\|\Delta^\frac{j}{2} y\|^2 +c_\epsilon \int_{0}^\infty g(s)\|\Delta^\frac{j}{2} \eta^t_{tt}\|^2ds-c_\epsilon E_{j,1}'(t).
$$
Replacing the previous inequality in \eqref{eq30} and in \eqref{eq37} for $j=1$ and $j=2,$ respectively,  we get that
\begin{equation}
\label{eq42}
 \|\Delta^\frac{j}{2} y\|^2\leq  c_{1,j}\|\eta^t\|_{L_j}^2+\dfrac{1}{2}\|\Delta^\frac{j}{2} y\|^2+c_{2,j}c_\epsilon \int_{0}^\infty g(s)\|\Delta^\frac{j}{2} \eta^t_{tt}\|^2ds-c_{2,j}c_\epsilon E_{j,1}'(t).
\end{equation}
Therefore, the properties \eqref{eq4} for the function $g$, together to the fact that $E_{j,k}$, given in \eqref{enrgy_h}, is non-increasing and \eqref{eq29} give us
\begin{equation*}
\begin{split}
\|\Delta^\frac{j}{2} y\|^2\leq & 2c_{1,j}\|\eta^t\|_{L_j}^2+2c_{2,j}c_\epsilon \int_{0}^\infty g(s)\|\Delta^\frac{j}{2} \eta^t_{tt}\|^2ds -2c_{2,j}c_\epsilon E_{j,1}'(t)\\
\leq&  c_{j,4}\left( E_j(t)+E_{j,1}(t)+ E_{j,2}(t)\right) \\
\leq & c\left( E_j(0)+E_{j,1}(0)+ E_{j,2}(0)\right)
\end{split}
\end{equation*}
where $c=c(j):=c_{j,4}=\max\{4c_{1,j},4c_{2,j}c_\epsilon,2c_0c_{2,j}c_\epsilon\}$, for $j\in\{1,2\}$, proving the lemma.
\end{proof}
%aqui

Before presenting the main result of this section, the next result ensures that the following norms  $\|\Delta^\frac{j}{2} \eta^t\|$,  $\|\eta^t\|$, and $\|\eta^t_{tt}\|$ can be controlled by the generalized energies $E_{k,j}(0)$ and the initial condition $y_0$, for $t\geq s\geq 0$. The result is the following one.
\begin{lemma} Considering the hypothesis of the Lemma \ref{lemma6},  the following inequality holds
\begin{equation}
\label{eq44}
\|\Delta^\frac{j}{2} \eta^t\|^2\leq M_{j,0}(t,s),
\end{equation}
where
\begin{equation}
\label{eq45}
M_{j,0}(t,s):=\left\{\begin{array}{l}
c\left(E_j(0)+E_{j,1}(0)+ E_{j,2}(0)\right), \mbox{\ if \ } 0\leq s\leq t,\\
\\
{\displaystyle \left\|\int_{0}^{s-t} \Delta^\frac{j}{2} y_0(\cdot,\tau)d\tau\right\|^2+2s^2c\left(E_j(0)+E_{j,1}(0)+ E_{j,2}(0)\right), \mbox{\ if \ } s>t\geq 0}.
\end{array}
\right.
\end{equation}
Additionally,  for $j=0$, we have 
\begin{equation}
\label{eq46}
\|\eta^t\|^2\leq M_{0,0}(t,s):=\left\{\begin{array}{l}
 2s^2E_0(0), \mbox{\ if \ } 0\leq s\leq t,\\
\\
{\displaystyle 2\left\|\int_{0}^{s-t} y_0(\cdot,\tau)d\tau\right\|^2+4s^2E_0(0), \mbox{\ if \ } s>t\geq 0}
\end{array}
\right.
\end{equation}
and
\begin{equation}
\label{eq48}
\|\eta^t_{tt}\|^2\leq M_{0,2}(t,s):=\left\{\begin{array}{l}
 2s^2E_{0,2}(0), \mbox{\ if \ } 0\leq s\leq t,\\
\\
{\displaystyle 2\left\|\int_{0}^{s-t} \partial^2_\tau y_0(\cdot,\tau)d\tau\right\|^2+4s^2E_{0,2}(0), \mbox{\ if  \ } s>t\geq 0.}
\end{array}
\right.
\end{equation}
\end{lemma}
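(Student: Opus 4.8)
The plan is to reduce all three estimates to the single elementary representation
$\partial_t^{k}\eta^{t}(\cdot,s)=\int_{t-s}^{t}\partial_\tau^{k}y(\cdot,\tau)\,d\tau$ for $k\in\{0,2\}$, and then to bound the resulting integrals by Minkowski's integral inequality and Cauchy--Schwarz, feeding in the monotonicity of the energy functionals $E_j$, $E_{j,1}$, $E_{j,2}$ together with Lemma~\ref{lemma6}.

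First I would record the representation. For $k=0$ it is nothing but the definition of $\eta^{t}$; for $k=2$ it follows by differentiating $\eta^{t}(\cdot,s)=\int_{t-s}^{t}y(\cdot,\tau)\,d\tau$ twice in $t$ via the Leibniz rule (both limits depending on $t$), which is legitimate because Theorem~\ref{teo1} in~\ref{sec2} provides enough time regularity of $y$; the case $k=1$ is checked to be consistent with \eqref{eq3}. The crucial case split is on the sign of $t-s$. If $0\le s\le t$ the integral runs only over nonnegative times. If $s>t$, I split it at $\tau=0$ and, on $[t-s,0]$, use the history condition $y(\cdot,-\tau)=y_{0}(\cdot,\tau)$ from \eqref{eq1} followed by the change of variables $\sigma=-\tau$ to rewrite $\int_{t-s}^{0}\partial_\tau^{k}y(\cdot,\tau)\,d\tau=\int_{0}^{s-t}\partial_\sigma^{k}y_{0}(\cdot,\sigma)\,d\sigma$. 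This produces exactly the two branches appearing in $M_{j,0}$, $M_{0,0}$ and $M_{0,2}$.

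Next, applying $\Delta^{j/2}$ and Minkowski's integral inequality, then Cauchy--Schwarz in $\tau$ to bound $\int_{t-s}^{t}\|\cdot\|\,d\tau\le s\,\sup_{\tau}\|\cdot\|$ and $\int_{0}^{t}\|\cdot\|\,d\tau\le t\,\sup_{\tau}\|\cdot\|\le s\,\sup_{\tau}\|\cdot\|$ when $s>t$, the problem reduces to pointwise-in-$\tau$ estimates. For those I would note that \eqref{eq28} and \eqref{eq29}, together with $g'\le 0$ (a consequence of \eqref{eq4}), make $E_j$, $E_{j,1}$, $E_{j,2}$ non-increasing, whence $\|y(\tau)\|^{2}\le 2E_{0}(\tau)\le 2E_{0}(0)$, $\|\partial_\tau^{2}y(\tau)\|^{2}\le 2E_{0,2}(\tau)\le 2E_{0,2}(0)$, and, for $j\in\{1,2\}$, $\|\Delta^{j/2}y(\tau)\|^{2}\le c\bigl(E_j(0)+E_{j,1}(0)+E_{j,2}(0)\bigr)$ by Lemma~\ref{lemma6} (whose right-hand side is already $\tau$-independent). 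Inserting these into the two branches, and using $(a+b)^{2}\le 2a^{2}+2b^{2}$ in the $s>t$ case together with $t^{2}\le s^{2}$, yields the bounds \eqref{eq44}, \eqref{eq46} and \eqref{eq48} (up to a harmless adjustment of the multiplicative constants in $M_{j,0}$).

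The only genuine obstacle is the bookkeeping in the regime $s>t$: one must be careful that the integrand "$\partial_\tau^{k}y$" at negative times is read off the history $y_{0}$ through $y(\cdot,-\tau)=y_{0}(\cdot,\tau)$, and that the Leibniz differentiation of $\eta^{t}$ is licit up to the order used; both are guaranteed by the well-posedness and regularity statements of Appendix~\ref{sec2}. Everything else is a routine application of Minkowski's and Cauchy--Schwarz inequalities and of the already-established monotonicity of the (higher-order) energy functionals.
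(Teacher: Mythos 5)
Your proposal is correct and follows essentially the same route as the paper: the integral representation of $\eta^t$ (split at $\tau=0$ when $s>t$, reading the negative times off the history $y_0$), H\"older/Cauchy--Schwarz, the monotonicity of the energies, and Lemma \ref{lemma6} for the pointwise bound on $\|\Delta^{j/2}y\|$. The only cosmetic difference is in \eqref{eq48}: you obtain $\eta^t_{tt}=\int_{t-s}^{t}\partial_\tau^2 y\,d\tau$ by Leibniz differentiation, while the paper phrases the same fact by noting that $V=\partial_t^2U$ is again a solution of the linear system \eqref{eq8} with shifted data and reapplying the earlier estimates — the two arguments are interchangeable.
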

\begin{proof}
Let us first prove \eqref{eq44}. H\"older inequality and \eqref{eq43}, for$j\in\{1,2\}$, gives that\begin{equation*}
\begin{split}
\|\Delta^\frac{j}{2}\eta^t\|^2=&\left\|\int_{t-s}^t \Delta^\frac{j}{2} y(\cdot, \tau)d\tau\right\|^2\leq\left(\int_{t-s}^t 1\cdot \|\Delta^\frac{j}{2} y(\cdot, \tau)\|d\tau\right)^2\leq s\left(\int_{t-s}^t\|\Delta^\frac{j}{2} y(\cdot, \tau)\|^2d\tau\right)\\
\leq& s^2c\left(E_j(0)+E_{j,1}(0)+ E_{j,2}(0)\right),
\end{split}
\end{equation*} 
for $t\geq s\geq 0.$ Analogously, 
\begin{equation*}
\begin{split}
\|\Delta^\frac{j}{2} \eta^t\|^2=\left\|\int_{t-s}^t \Delta^\frac{j}{2} y(\cdot,\tau)d\tau\right\|^2
\leq2\left\|\int_{0}^{s-t} \Delta^\frac{j}{2} y_0(\cdot,\tau)d\tau\right\|^2+2s^2c\left(E_j(0)+E_{j,1}(0)+ E_{j,2}(0)\right),
\end{split}
\end{equation*}
when $s>t\geq 0.$  Consequently,   \eqref{eq44} is verified. 

Now, for $j=0,$ since $\|y\|^2$ is part of $E_0$ (see \eqref{energy}),  and the energy $E_0$ is non-increasing,  we observe,  using H\"older inequality,  that
\begin{equation*}
\begin{split}
\|\eta^t\|^2=&\left\|\int_{t-s}^{t} y(\cdot,\tau)d\tau\right\|^2\leq\left(\int_{t-s}^{t} 1\cdot\|y(\cdot,\tau)\|d\tau\right)^2 \leq s\int_{t-s}^{t} \|y(\cdot,\tau)\|^2d\tau\\
\leq&s\int_{t-s}^{t} 2E_0(\tau)d\tau\leq s\int_{t-s}^{t} 2E_0(0)d\tau=2s^2E_0(0),\\
\end{split}
\end{equation*}
for $t\geq s\geq 0.$ On the other hand, 
\begin{equation*}
\begin{split}
\|\eta^t\|^2=&\left\|\int_{0}^{s-t} y_0(\cdot,\tau)d\tau+\int_{0}^{t} y(\cdot,\tau)d\tau\right\|^2
\leq2\left\|\int_{0}^{s-t} y_0(\cdot,\tau)d\tau\right\|^2+2\left\|\int_{0}^{t} y(\cdot,\tau)d\tau\right\|^2\\\
\leq&2\left\|\int_{0}^{s-t} y_0(\cdot,\tau)d\tau\right\|^2+4E_0(0)s^2,
\end{split}
\end{equation*}
for  $s>t\geq 0$.  Thus,  \eqref{eq46} follows.

Finally, let us prove \eqref{eq48}.  To do that, observe that \eqref{eq8} is linear and $V=\partial^2_tU$ is solution for \eqref{eq8} with initial condition $V(0)(x,s)=(\partial^2_t y_0(x,0),\zeta^0(x,s)),$ where ${\displaystyle \zeta^0(x,s)=\int_0^s\partial^2_\tau y_0(x,\tau)d\tau.}$ Thanks to relation  \eqref{eq45},  for $j\in\{1,2\}$, we get that
\begin{equation}
\label{eq47}
M_{j,2}(t,s):=\left\{\begin{array}{l}
 c_{j,5}\left(E_{j,2}(0)+E_{j,3}(0)+ E_{j,4}(0)\right), \mbox{\ if \ } 0\leq s\leq t,\\
\\
{\displaystyle 2\left\|\int_{0}^{s-t} \Delta^\frac{j}{2} \partial^2_\tau y_0(\cdot,\tau)d\tau\right\|^2}\\
\ \ {\displaystyle \quad \quad+2s^2c_{j,5}\left(E_{j,2}(0)+E_{j,3}(0)+ E_{j,4}(0)\right), \mbox{\ if \ } s>t\geq 0,}
\end{array}
\right.
\end{equation}
and so,
$$
\|\eta^t_{tt}\|^2\leq M_{j,2}(t,s).
$$
Therefore,  inequality \eqref{eq48} follows using the previous inequality with $j=0$, and thanks to the relation \eqref{eq46},  the result is proved. 
\end{proof}
The next result is the key lemma to establish the stabilization result for the Biharmonic Schr\"odinger system \eqref{eq1}. 
\begin{lemma} There exist positive constants $d_{j,k}, $ for each $j\in\{0,1,2\}$ and each $k\in\{0,2\}$ such that the following inequality holds 
\begin{equation}
\label{eq49}
\dfrac{G_0(\epsilon_0E_j(t))}{\epsilon_0E_j(t)}{\displaystyle \int_0^\infty g(s)\|\Delta^\frac{j}{2}\partial_t^k\eta^t\|^2ds}\leq -d_{j,k}E_{j,k}'(t)+d_{j,k} G_0(\epsilon_0E_j(t)),
\end{equation}
for any $\epsilon_0>0$.  Here,  $E_{j,0}=E_j, E'_{j,0}=E'_j(0)$ and $G_0$ defined as in Theorem \ref{teo2}.
\end{lemma}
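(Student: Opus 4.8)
The plan is to treat the two alternatives of Assumption \ref{H1} separately. First I would fix the notation $Q_{j,k}(t):=\int_0^\infty g(s)\|\Delta^\frac{j}{2}\partial_t^k\eta^t\|^2\,ds$, so that the left‑hand side of \eqref{eq49} equals $\frac{G_0(\epsilon_0E_j(t))}{\epsilon_0E_j(t)}Q_{j,k}(t)$, recall from \eqref{eq29} that $\int_0^\infty(-g'(s))\|\Delta^\frac{j}{2}\partial_t^k\eta^t\|^2\,ds=-2E_{j,k}'(t)\geq0$, and use that $E_j$ and every $E_{j,k}$ are non‑increasing. When \eqref{eq21} holds one has $G_0(s)=s$, hence $\frac{G_0(\epsilon_0E_j(t))}{\epsilon_0E_j(t)}=1$, and \eqref{eq49} is immediate: from $\alpha_0 g(s)\leq -g'(s)$ one gets $Q_{j,k}(t)\leq\frac1{\alpha_0}\int_0^\infty(-g'(s))\|\Delta^\frac{j}{2}\partial_t^k\eta^t\|^2\,ds=-\frac{2}{\alpha_0}E_{j,k}'(t)$, and since $d_{j,k}G_0(\epsilon_0E_j(t))\geq0$ the choice $d_{j,k}=2/\alpha_0$ works. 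The whole difficulty is in the case \eqref{eq22}, where $G_0(s)=sG'(s)$, so the target becomes $G'(\epsilon_0E_j(t))\,Q_{j,k}(t)\leq -d_{j,k}E_{j,k}'(t)+d_{j,k}\,\epsilon_0E_j(t)\,G'(\epsilon_0E_j(t))$.

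In that case the first step is a uniform‑in‑$t$ a priori bound $\int_0^\infty\frac{g(s)}{G^{-1}(-g'(s))}\|\Delta^\frac{j}{2}\partial_t^k\eta^t\|^2\,ds\leq B_{j,k}$, with $B_{j,k}$ depending only on the initial data. To get it I would invoke Lemma \ref{lemma6} and the estimates \eqref{eq44}–\eqref{eq48} (together with their analogues for $\partial_t^2U$, which solves the same system \eqref{eq8}) to bound $\|\Delta^\frac{j}{2}\partial_t^k\eta^t\|^2$ by $M_{j,k}(t,s)$, then split $\int_0^\infty=\int_0^t+\int_t^\infty$ and control the three resulting pieces: the part $s\leq t$ by $\int_0^\infty\frac{g(s)}{G^{-1}(-g'(s))}\,ds<\infty$, the $s^2$‑part of the piece $s>t$ by $\int_t^\infty\frac{s^2g(s)}{G^{-1}(-g'(s))}\,ds<\infty$, and the remaining $y_0$‑part of that piece exactly by hypothesis \eqref{eq23} (this is the only place \eqref{eq23} is used; note $k\in\{0,2\}\subset\{0,\dots,n+1\}$ since $n\geq1$). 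The finiteness of the first two pieces is precisely what \eqref{eq22} supplies.

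The second step is a level‑set decomposition of the memory integral. Put $C_g:=\sup_{s}\frac{g(s)}{G^{-1}(-g'(s))}<\infty$ and, for a threshold $\rho>0$ to be chosen, split $\R_+$ into $\{s:-g'(s)>\rho\}$ and $\{s:-g'(s)\leq\rho\}$. On the first set, $g(s)\leq C_gG^{-1}(-g'(s))$ combined with the monotonicity of $u\mapsto G^{-1}(u)/u$ (a consequence of convexity of $G$ and $G(0)=0$) gives $\frac{g(s)}{-g'(s)}\leq C_g\frac{G^{-1}(\rho)}{\rho}$, so that portion of $Q_{j,k}(t)$ is at most $-2C_g\frac{G^{-1}(\rho)}{\rho}E_{j,k}'(t)$; on the second set, monotonicity of $G^{-1}$ gives $G^{-1}(-g'(s))\leq G^{-1}(\rho)$, so that portion is at most $G^{-1}(\rho)B_{j,k}$. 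Hence $Q_{j,k}(t)\leq -2C_g\frac{G^{-1}(\rho)}{\rho}E_{j,k}'(t)+B_{j,k}G^{-1}(\rho)$ for every $\rho>0$. Finally I would calibrate $\rho=\rho(t)$ against $\mathcal E:=\epsilon_0E_j(t)$: writing $m=G^{-1}(\rho)$, one needs $2C_gG'(\mathcal E)\frac{m}{G(m)}\leq d_{j,k}$ and $B_{j,k}m\leq d_{j,k}\mathcal E$ to hold simultaneously; choosing $m$ by $\frac{m}{G(m)}=\frac{d_{j,k}}{2C_gG'(\mathcal E)}$ (possible since $m\mapsto m/G(m)$ is a decreasing bijection of $\R_+^*$, using $G'(\infty)=\infty$) and taking $d_{j,k}\geq B_{j,k}+2C_g$ so that $c:=d_{j,k}/B_{j,k}\geq1$, the elementary convexity bounds $G(c\mathcal E)\geq cG(\mathcal E)$ and $G(c\mathcal E)\geq G(\mathcal E)+(c-1)\mathcal E\,G'(\mathcal E)$ force $m\leq c\mathcal E$, which is the second requirement; plugging back and using $\mathcal E G'(\mathcal E)=G_0(\epsilon_0E_j(t))$ yields \eqref{eq49} with $d_{j,k}=B_{j,k}+2C_g$.

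The routine parts are the manipulation of the higher‑order energies and the bookkeeping of constants. The main obstacle is the calibration of $\rho$ in the last step: the two inequalities it must satisfy pull $\rho$ in opposite directions, and making them compatible \emph{uniformly in $\epsilon_0>0$ and in $t$} is exactly where the convexity of $G$ and the normalizations $G(0)=G'(0)=0$, $\lim G'=\infty$ enter in an essential way — this is the core of the Guesmia‑type argument of \cite{Guesmia} adapted to the present setting.
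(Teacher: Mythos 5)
Your proof is correct, and in the substantive case it takes a genuinely different route from the paper. For the alternative \eqref{eq21} your argument coincides with the paper's (both reduce to $g\leq -g'/\alpha_0$ and \eqref{eq29}, with $d_{j,k}=2/\alpha_0$ and $G_0=\mathrm{id}$, discarding the non-negative term $d_{j,k}G_0(\epsilon_0E_j)$). For the alternative \eqref{eq22} the paper follows Guesmia's scheme: it writes the memory integral with auxiliary weights $\tau_{j,k},\theta_{j,k}$, uses the monotonicity of $K(s)=s/G^{-1}(s)$ together with the bound $\|\Delta^{j/2}\partial_t^k\eta^t\|^2\leq M_{j,k}(t,s)$, and then applies the Young--Fenchel inequality $s_1s_2\leq G(s_1)+G^*(s_2)$ with $G^*(s)\leq s(G')^{-1}(s)$, finally fixing $\theta_{j,k}=1/M_{j,k}$ and $\tau_{j,k}=1/(m_0M_{j,k})$ to land on $-2m_0E_{j,k}'/G'(\epsilon_0E_j)+\epsilon_0m_1E_j$. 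You instead prove the same uniform bound $\sup_t\int_0^\infty \frac{g(s)}{G^{-1}(-g'(s))}\|\Delta^{j/2}\partial_t^k\eta^t\|^2ds<\infty$ (your $B_{j,k}$ is the paper's $m_1$, obtained from the same ingredients $M_{j,k}$, \eqref{eq22} and \eqref{eq23}), and then replace the convex-conjugate step by a level-set splitting of $\mathbb{R}_+$ at $-g'(s)=\rho$, using the monotonicity of $u\mapsto G^{-1}(u)/u$ on one piece and of $G^{-1}$ on the other, followed by a calibration of $\rho$ against $\epsilon_0E_j(t)$; I checked that the calibration closes (the condition $d_{j,k}\geq B_{j,k}+2C_g$ is exactly what makes $m\leq (d_{j,k}/B_{j,k})\,\epsilon_0E_j(t)$ via the tangent-line inequality $G(c\mathcal E)\geq G(\mathcal E)+(c-1)\mathcal E G'(\mathcal E)$). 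Your route is more elementary in that it avoids introducing $G^*$ and the estimate $G^*(s)\leq s(G')^{-1}(s)$, at the price of the threshold calibration, which is where strict convexity and $\lim_{t\to\infty}G'(t)=\infty$ enter for you just as they enter through $(G')^{-1}$ for the paper; both arguments yield the same structural conclusion and comparable constants. The only bookkeeping point worth noting is that for $j=0$ the bound $M_{0,k}(t,s)$ carries a factor $s^2$ also on the region $0\leq s\leq t$, so the ``$s\leq t$'' piece of your $B_{j,k}$ must there be controlled by $\int_0^\infty \frac{s^2g(s)}{G^{-1}(-g'(s))}ds$ rather than by $\int_0^\infty \frac{g(s)}{G^{-1}(-g'(s))}ds$; both are finite under \eqref{eq22}, so nothing breaks.
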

\begin{proof}
Suppose, first, that the relation \eqref{eq21} is satisfied. So, thanks to the relation \eqref{eq29}, we have 
$$E'_{j,k}=\dfrac{1}{2}\int_0^\infty g'(s)\|\Delta^\frac{j}{2}\partial_t^k\eta^t\|^2ds\leq -\dfrac{1}{2}\alpha_0\int_0^\infty g(s)\|\Delta^\frac{j}{2}\partial_t^k\eta^t\|^2ds,$$
for each $j\in\{0,1,2\}$ and each $k\in\{0,2\}$,  that is, 
$$ \int_0^\infty g(s)\|\Delta^\frac{j}{2}\partial_t^k\eta^t\|^2ds\leq -\dfrac{2}{\alpha_0}E'_{j,k},
$$
showing \eqref{eq49} for each $d_{j,k}=\dfrac{2}{\alpha_0}$ and $G_0(s)=s.$ 

On the other hand, suppose now that \eqref{eq22} and \eqref{eq5} are verified.  Let us assume, without loss of generality, that $E_j(t)> 0$ and $g'<0$ in $\R_{+}$.   Let $\tau_{j,k}(t,s),\theta_j(t,s), j\in\{0,1,2\},\ k\in\{0,2\}$ and $\epsilon_0$ be a positive real number which will be fixed later on, and $K(s)=\dfrac{s}{G^{-1}(s)},$ for $s>0.$ Assumption \ref{H1} implies that $$\lim_{s \rightarrow 0^+}K(s)=\lim_{s \rightarrow 0^+}\dfrac{s}{G^{-1}(s)}=\lim_{\tau=G^{-1}(s) \rightarrow 0^+}\dfrac{G(\tau)}{\tau}=G'(0)=0.$$ Additionally, thanks to the continuity of $K$ we have $K(0)=0.$ 

We claim tha the function $K$ is non-decreasing. Indeed, since $G$ is convex we have that  $G^{-1}$  is concave and $G^{-1}(0)=0,$  implying that 
\begin{equation*}
K(s_1)= \dfrac{s_1}{G^{-1}\left(\frac{s_1}{s_2}s_2+\left(1-\frac{s_1}{s_2}\right)\cdot 0\right)}\leq \dfrac{s_1}{\frac{s_1}{s_2}G^{-1}\left(s_2\right)}=\dfrac{s_2}{G^{-1}\left(s_2\right)}=K(s_2),
\end{equation*}
for $0\leq s_1<s_2$, proving the claim.

Now, note that thanks to the fact that $K$ is non-decreasing and by \eqref{eq44}, \eqref{eq46}, \eqref{eq47}, and \eqref{eq48},  we get
\begin{equation}
\label{eq50}
K\left(-\theta_{j,k}(t,s)g'(s)\|\Delta^\frac{j}{2} \partial^k_t\eta^t\|^2\right)\leq K\left(-\theta_{j,k}(t,s)g'(s)M_{j,k}(t,s)\right).
\end{equation}
Inequality \eqref{eq50} yields that
\begin{equation}
\label{eq51}
\begin{split}
 \int_0^\infty g(s)\|\Delta^\frac{j}{2} \partial^k_t\eta^t\|^2 ds=&\dfrac{1}{G'\left(\epsilon_0 E_j(t)\right)}\int_0^\infty \dfrac{1}{\tau_{j,k}(t,s)}G^{-1}(-\theta_{j,k}g'(s)\|\Delta^\frac{j}{2} \partial^k_t\eta^t\|^2)\\
& \times\dfrac{\tau_{j,k}(t,s)G'\left(\epsilon_0 E_j(t)\right)g(s)}{-\theta_{j,k} g'(s)}K\left(-\theta_{j,k}g'(s)\|\Delta^\frac{j}{2}\partial^k_t \eta^t\|^2\right)ds\\
\leq& \dfrac{1}{G'\left(\epsilon_0 E_j(t)\right)}\int_0^\infty \dfrac{1}{\tau_{j,k}(t,s)}G^{-1}(-\theta_{j,k}g'(s)\|\Delta^\frac{j}{2} \partial^k_t\eta^t\|^2)\\
& \times\dfrac{\tau_{j,k}(t,s)G'\left(\epsilon_0 E_j(t)\right)g(s)}{-\theta_{j,k} g'(s)}K\left(-M_{j,k}\theta_{j,k}g'(s)\right)ds\\
\leq& \dfrac{1}{G'\left(\epsilon_0 E_j(t)\right)}\int_0^\infty \dfrac{1}{\tau_{j,k}(t,s)}G^{-1}(-\theta_{j,k}g'(s)\|\Delta^\frac{j}{2}\partial^k_t \eta^t\|^2)\\
& \times\dfrac{M_{j,k}(t,s)\tau_{j,k}(t,s)G'\left(\epsilon_0 E_j(t)\right)g(s)}{G^{-1}\left(-M_{j,k}\theta_{j,k}g'(s)\right)}ds.
\end{split}
\end{equation}

Denote the dual function of $G$ by $G^*(s)=\sup_{\tau\in\R_+}\{s\tau-G(\tau)\},$ for $s\in\R_+$.  From the Assumption \ref{H1} we have $$G^*(s)=s(G')^{-1}(s)-G((G')^{-1}(s)), \ s\in \R_+.$$ Observe also that 
$$
s_1s_2\leq G(s_1)+G^*(s_2),\ \forall s_1,s_2\in\R_+,
$$
in particular 
$$s_1=G^{-1}\left(-\theta_{j,k}(t,s)g'(s)\|\Delta^\frac{j}{2} \partial^k_t\eta^t\|^2\right)$$
and
$$s_2=\dfrac{M_{j,k}\tau_{j,k}G'(\epsilon_0 E_j(t))g(s)}{-M_{j,k}(t,s)g'(s)\theta_{j,k}}.$$
Therefore,  we obtain, by using the previous equality in \eqref{eq51}, that
\begin{equation*}
\begin{split}
 \int_0^\infty g(s)\|\Delta^\frac{j}{2} \partial^k_t\eta^t\|^2 ds \leq& \dfrac{1}{G'\left(\epsilon_0 E_j(t)\right)}\int_0^\infty \dfrac{1}{\tau_{j,k}(t,s)}\left(-\theta_{j,k}g'(s)\|\Delta^\frac{j}{2} \partial^k_t\eta^t\|^2\right)ds\\
&+ \dfrac{1}{G'\left(\epsilon_0 E_j(t)\right)}\int_0^\infty \dfrac{1}{\tau_{j,k}}G^*\left(\dfrac{M_{j,k}\tau_{j,k}G'\left(\epsilon_0 E_j(t)\right)g(s)}{G^{-1}\left(-M_{j,k}\theta_{j,k}g'(s)\right)}\right)ds.
\end{split}
\end{equation*}
Using that $G^*(s)\leq s(G')^{-1}(s),$  we get
\begin{equation*}
\begin{split}
\int_0^\infty g(s)\|\Delta^\frac{j}{2} \partial^k_t\eta^t\|^2 ds\leq&\dfrac{1}{G'\left(\epsilon_0 E_j(t)\right)}\left[\int_0^\infty \dfrac{1}{\tau_{j,k}}\left(-\theta_{j,k}g'(s)\|\Delta^\frac{j}{2}\partial^k_t \eta^t\|^2\right)ds\right.\\
&+\left.\int_0^\infty \dfrac{1}{\tau_{j,k}}\dfrac{M_{j,k}\tau_{j,k}G'\left(\epsilon_0 E_j(t)\right)g(s)}{G^{-1}\left(-M_{j,k}\theta_{j,k}g'(s)\right)}(G')^{-1}\left(\dfrac{M_{j,k}\tau_{j,k}G'\left(\epsilon_0 E_j(t)\right)g(s)}{G^{-1}\left(-M_{j,k}\theta_{j,k}g'(s)\right)}\right)ds\right].
\end{split}
\end{equation*}
Pick $\theta_{j,k}=\frac{1}{M_{j,k}},$  to ensure that
\begin{equation*}
\begin{split}
 \int_0^\infty g(s)\|\Delta^\frac{j}{2}\partial^k_t \eta^t\|^2 ds\leq&  \dfrac{1}{G'\left(\epsilon_0 E_j(t)\right)}\int_0^\infty \dfrac{1}{\tau_{j,k}M_{j,k}}\left(-g'(s)\|\Delta^\frac{j}{2}\partial^k_t \eta^t\|^2\right)ds\\& + \int_0^\infty \dfrac{M_{j,k}g(s)}{G^{-1}\left(-g'(s)\right)}(G')^{-1}\left(\dfrac{M_{j,k}\tau_{j,k}G'\left(\epsilon_0 E_j(t)\right)g(s)}{G^{-1}\left(-g'(s)\right)}\right)ds.
\end{split}
\end{equation*}
Thanks to the fact that $(G')^{-1}$ is non-decreasing we get, 
\begin{equation*}
\begin{split}
\int_0^\infty g(s)\|\Delta^\frac{j}{2} \partial^k_t\eta^t\|^2 ds\leq& \dfrac{1}{G'\left(\epsilon_0 E_j(t)\right)}\int_0^\infty \dfrac{1}{\tau_{j,k}M_{j,k}}\left(-g'(s)\|\Delta^\frac{j}{2}\partial^k_t \eta^t\|^2\right)ds\\
&+ \int_0^\infty \dfrac{M_{j,k}g(s)}{G^{-1}\left(-g'(s)\right)}(G')^{-1}\left(m_0M_{j,k}\tau_{j,k}G'\left(\epsilon_0 E_j(t)\right)\right)ds,
\end{split}
\end{equation*}
where $m_0=\sup_{s\in\R_+}\dfrac{g(s)}{G^{-1}\left(-g'(s)\right)}$.  Note that \eqref{eq22} and \eqref{eq23},  yields that
$$m_1=\sup_{s\in\R_+}\int_0^\infty\dfrac{M_{j,k}(s,t)g(s)}{G^{-1}\left(-g'(s)\right)}ds<\infty.$$
Thus, using that  $\tau_{j,k}(t,s)=\frac{1}{m_0M_{j,k}(t,s)}$ and relation \eqref{eq28},  we have that
\begin{equation*}
\begin{split}
 \int_0^\infty g(s)\|\Delta^\frac{j}{2} \partial^k_t\eta^t\|^2 ds\leq& -\dfrac{m_0}{G'\left(\epsilon_0 E_j(t)\right)}\int_0^\infty \left(g'(s)\|\Delta^\frac{j}{2} \partial^k_t\eta^t\|^2\right)ds\\&+ \left(\epsilon_0 E_j(t)\right)\int_0^\infty \dfrac{M_{j,k}g(s)}{G^{-1}\left(-g'(s)\right)}ds\\
=& -\dfrac{2m_0}{G'\left(\epsilon_0 E_j(t)\right)}E_{j,k}'(t)
+ \epsilon_0 m_1 E_j(t).
\end{split}
\end{equation*}
Finally, multiplying the previous inequality by $G'\left(\epsilon_0 E_j(t)\right)=\dfrac{G_0\left(\epsilon_0 E_j(t)\right)}{\epsilon_0 E_j(t)}$ gives
$$\dfrac{G_0\left(\epsilon_0 E_j(t)\right)}{\epsilon_0 E_j(t)}\int_0^\infty g(s)\|\Delta^\frac{j}{2}\partial^k_t \eta^t\|^2 ds\leq -2m_0E_{j,k}'(t)+  m_1G_0\left(\epsilon_0 E_j(t)\right),
$$
which taking $d_{j,k}=\max\{2m_0,m_1\},$ ensures \eqref{eq49}, showing the lemma. 
\end{proof}

\section{Proof of Theorem \ref{teo2}}\label{sec4}
%With the previous auxiliary results,  we can prove that the Biharmonic Schr\"odinger system \eqref{eq1} under the presence of an infinite memory is stable.
%\begin{pf}[]  
Let us split the proof into two cases: a) $n=1$ and b) $n>1$.

\begin{itemize}
\item[a)] $n=1$
\end{itemize}

Poincar\'e inequality gives us 
$$\|y\|^2\leq c_*\|\nabla y\|^2\leq  c_*^2\|\Delta y\|^2,$$
where $c_*>0$ is the Poincar\'e constant.  Summarizing, 
$$
\|y\|^2\leq  c_{**}\|\Delta^\frac{j}{2} y\|^2,
$$
for $c_{**}$ defined by \eqref{c*}.  From the definition of $E_j$ given by \eqref{energy} we found that
$$
 \dfrac{2}{\epsilon_0c_{**}}G_0(\epsilon_0E_j(t))\leq \dfrac{G_0(\epsilon_0E_j(t))}{\epsilon_0E_j(t)} \|\Delta^\frac{j}{2} y\|^2+\dfrac{1}{c_{**}}\dfrac{G_0(\epsilon_0E_j(t))}{\epsilon_0E_j(t)}\int_0^\infty g(s)\|\Delta^\frac{j}{2} \eta^t\|^2ds.
$$
Thanks to the inequality \eqref{eq42}, we have
\begin{equation}
\label{eq52}
\begin{split}
\dfrac{2}{\epsilon_0c_{**}}G_0(\epsilon_0E_j(t) \leq&\dfrac{G_0(\epsilon_0E_j(t))}{\epsilon_0E_j(t)}  \left(2c_{1,j}\|\eta^t\|_{L_j}^2+2c_{2,j}c_\epsilon \int_{0}^\infty g(s)\|\Delta^\frac{j}{2} \eta^t_{tt}\|^2ds -2c_{2,j}c_\epsilon E_{j,1}'(t)\right)\\&+\dfrac{1}{c_{**}}\dfrac{G_0(\epsilon_0E_j(t))}{\epsilon_0E_j(t)}\int_0^\infty g(s)\|\Delta^\frac{j}{2} \eta^t\|^2ds\\
=&\dfrac{G_0(\epsilon_0E_j(t))}{\epsilon_0E_j(t)} \left(2c_{2,j}c_\epsilon \int_{0}^\infty g(s)\|\Delta^\frac{j}{2} \eta^t_{tt}\|^2ds-2c_{2,j}c_\epsilon E_{j,1}'(t)\right)\\& +\left(2c_{1,j}+\dfrac{1}{c_{**}}\right)\dfrac{G_0(\epsilon_0E_j(t))}{\epsilon_0E_j(t)}\int_0^\infty g(s)\|\Delta^\frac{j}{2} \eta^t\|^2ds.
\end{split}
\end{equation}
Combining \eqref{eq52} with \eqref{eq49}, gives
\begin{equation*}
\begin{split}
\dfrac{2}{\epsilon_0c_{**}}G_0(\epsilon_0E_j(t))\leq &  -2c_{2,j}c_\epsilon d_{j,2}E_{j,2}'(t)+2c_{2,j}c_\epsilon d_{j,2} G_0(\epsilon_0E_j(t))-2c_{2,j}\dfrac{G_0(\epsilon_0E_j(t))}{\epsilon_0E_j(t)}c_\epsilon E_{j,1}'(t)\\
&-d_{j,0}\left(2c_{1,j}+\dfrac{1}{c_{**}}\right)E_{j}'(t)+d_{j,0}\left(2c_{1,j}+\dfrac{1}{c_{**}}\right) G_0(\epsilon_0E_j(t)).
\end{split}
\end{equation*}
So,
\begin{equation}
\label{eq53}
\begin{split}
\left(\dfrac{2}{\epsilon_0c_{**}}-2c_{2,j}c_\epsilon d_{j,2}-d_{j,0}\left(2c_{1,j}+\dfrac{1}{c_{**}}\right) \right)&G_0(\epsilon_0E_j(t))\leq  -2c_{2,j}c_\epsilon d_{j,2}E_{j,2}'(t)  \\&-2c_{2,j}\dfrac{G_0(\epsilon_0E_j(t))}{\epsilon_0E_j(t)}c_\epsilon E_{j,1}'(t) -d_{j,0}\left(2c_{1,j}+\dfrac{1}{c_{**}}\right)E_{j}'(t).\\
\end{split}
\end{equation}
Observe that $H_0(s)=\dfrac{G_0(s)}{s}$ is non-decreasing and $E_j$ is non-increasing for each $j$, thus $\dfrac{G_0(\epsilon_0E_j(t))}{\epsilon_0E_j(t)}$ is non-increasing for each $j$, and therefore by \eqref{eq53} we get
\begin{equation}
\label{eq54}
\begin{split}
\left(\dfrac{2}{\epsilon_0c_{**}}-2c_{2,j}c_\epsilon d_{j,2}-d_{j,0}\left(2c_{1,j}+\dfrac{1}{c_{**}}\right) \right)&G_0(\epsilon_0E_j(t))\leq  -2c_{2,j}c_\epsilon d_{j,2}E_{j,2}'(t)\\ &-2c_{2,j}\dfrac{G_0(\epsilon_0E_j(0))}{\epsilon_0E_j(0)}c_\epsilon E_{j,1}'(t)-d_{j,0}\left(2c_{1,j}+\dfrac{1}{c_{**}}\right)E_{j}'(t).
\end{split}
\end{equation}
For $\epsilon_0>0$ small enough  we have $$c_1=\left(\dfrac{2}{\epsilon_0c_{**}}-2c_{2,j}c_\epsilon d_{j,2}-d_{j,0}\left(2c_{1,j}+\dfrac{1}{c_{**}}\right)\right)>0.$$ Thus, dividing  \eqref{eq54} by $c_1>0$ yields that
\begin{equation}
\label{eq55}
G_0(\epsilon_0E_j(t))\leq   -c_2\left(E_{j}'(t)+ E_{j,1}'(t)+E_{j,2}'(t)\right),
\end{equation}
where 
 $$c_2=\max\left\{\dfrac{2c_{2,j}c_\epsilon d_{j,2}}{c_1},\dfrac{2c_{2,j}\dfrac{G_0(\epsilon_0E_j(0))}{\epsilon_0E_j(0)}c_\epsilon}{c_1},\dfrac{d_{j,0}\left(2c_{1,j}+\dfrac{1}{c_{**}}\right)}{c_1}\right\}. $$
 Now, integrating \eqref{eq55} on $[0,t], \ t\in\R_+^*,$ and observing that  $G_0(\epsilon_0E_j(t))$ is non-increasing gives
\begin{equation*}
\begin{split}
tG_0(\epsilon_0E_j(t))=&  \int_0^t G_0(\epsilon_0E_j(t))ds\leq \int_0^t G_0(\epsilon_0E_j(s))ds
\leq   -c_2\int_0^t \left(E_{j}'(s)+ E_{j,1}'(s)+E_{j,2}'(s)\right)ds\\
=&  -c_2\left(E_{j}(t)+ E_{j,1}(t)+E_{j,2}(t)\right) +c_2\left(E_{j}(0)+ E_{j,1}(0)+E_{j,2}(0)\right)\\
\leq& c_2\left(E_{j}(0)+ E_{j,1}(0)+E_{j,2}(0)\right)=:c_3.
\end{split}
\end{equation*}
Due to the fact that $G_0$ is inversible and non-decreasing, we deduce that
$$E_j(t)\leq \dfrac{1}{\epsilon_0}(G_0)^{-1}\left(\dfrac{c_3}{t}\right)= \dfrac{1}{\epsilon_0}G_1\left(\dfrac{c_3}{t}\right)\leq \alpha_{j,1}G_1\left(\dfrac{\alpha_{j,1}}{t}\right),$$
for $\alpha_{j,1}=\max\left\{\dfrac{1}{\epsilon_0},c_3\right\},$ showing \eqref{eq24} when $n=1.$

\begin{itemize}
\item[a)] $n>1$
\end{itemize}
Suppose, for induction hypothesis, that for some $n\in\mathbb{N}^*,$ we have that \eqref{eq24} is verified when  $U_0\in D(\mathcal{A}_j^{2n+2})$ for $j\in\{1,2\}$  and $U_0\in D(\mathcal{A}_j^{2n})$ for $j=0.$  For $j\in\{1,2\}$, let us take $U_0\in D(\mathcal{A}_j^{2(n+1)+2})$ and for $j=0,$  take $U_0\in D(\mathcal{A}_j^{2(n+1)}).$ So when $j\in\{1,2\}$ we have $$U_0\in D(\mathcal{A} _j^{2(n+1)+2})\subset D(\mathcal{A }_j^{2n+2}), \quad U_t(0)\in D(\mathcal{A}_j^{2 (n+1)+1})\subset D(\mathcal{A}_j^{ 2n+2}), \quad \text{ and } U_{tt}(0)\in D(\mathcal{A}_j^{2n+ 2}).$$  Now, for  $j=0,$  we found $$U_0\in D(\mathcal{A}_j^{2(n+1)})\subset D(\mathcal{A}_j^{2n}), \quad U_t(0)\in D(\mathcal{A}_j^{2n+1})\subset D(\mathcal{A}_j^{2n}), \quad \text{ and }U_{tt}(0)\in D(\mathcal{A}_j^{2n}).$$ So, follows from the induction hypothesis that: there exists $\alpha_{j,n}$  such that $$E_j(t)\leq \alpha_{j,n} G_n\left(\dfrac{\alpha_{j,n}}{t}\right), \forall t\in \R_+^*.$$

Now,  since  $U_t$ and $U_{tt}$ are solution of \eqref{eq8} with initial conditions  $U_t(0)\in D(\mathcal{A}_j^{2n+2})$ and  $U_{tt}(0)\in D(\mathcal{A}_j^{2n+2})$, respectively,  the induction hypothesis guarantees the existence of $\beta_{n,t}>0$ and $\gamma_{n,t}>0$, such that 
$$E_{j,1}(t)\leq \beta_{j,n} G_n\left(\dfrac{\beta_{j,n}}{t}\right), \forall t\in \R_+^* \quad \text{and} \quad E_{j,2}(t)\leq \gamma_{j,n} G_n\left(\dfrac{\gamma_{j,n}}{t}\right), \forall t\in \R_+^*,$$
respectively.  Thus,  as $G_n's$ are non-decreasing for $\tilde{d}_{j,n}=\max\{3\alpha_{j,n},3\beta_{j,n},3\gamma_{j,n}\},$ we get 
$$
E_j(t)+E_{j,1}(t)+E_{j,2}(t)\leq \tilde{d}_{j,n}  G_n\left(\dfrac{\tilde{d}_{j,n}}{t}\right).
$$
Finally, how $t\in[T,2T]$,  we have
$$G_0(\epsilon_0E_j(2T))\leq G_0(\epsilon_0E_j(t))$$
and from  \eqref{eq55} we found the following
\begin{equation*}
\begin{split}
TG_0(\epsilon_0E_j(2T))\leq&  \int_T^{2T}G_0(\epsilon_0E_j(t))dt\leq -c_2\int_T^{2T}\left(E_j'(t)+E_{j,1}'(t)+E_{j,2}'(t)\right)dt\\
=& -c_2\left(E_j(2T)+E_{j,1}(2T)+E_{j,2}(2T)\right)+c_2\left(E_j(T)+E_{j,1}(T)+E_{j,2}(T)\right)\\
\leq& c_2\left(E_j(T)+E_{j,1}(T)+E_{j,2}(T)\right)\leq c_2\tilde{d}_{j,n}  G_n\left(\dfrac{\tilde{d}_{j,n}}{T}\right)\leq  d_{j,n}  G_n\left(\dfrac{d_{j,n}}{T}\right),
\end{split}
\end{equation*}
where  $d_{j,n}=\max\{c_2\tilde{d}_{j,n},\tilde{d}_{j,n}\}.$ Moreover,  as $G_0$ is non-decreasing,  $G_1=G_0^{-1}$ is also non-decreasing.  Therefore, 
$$E_j(2T)\leq \dfrac{1}{\epsilon_0}G_0^{-1}\left(\dfrac{2d_{j,n}}{2T}  G_n\left(\dfrac{2d_{j,n}}{2T}\right)\right)=\dfrac{1}{\epsilon_0}G_1\left(\tilde{s}  G_n\left(\tilde{s}\right)\right)=\dfrac{1}{\epsilon_0} G_{n+1}\left(\tilde{s}\right)=\alpha_{j,n+1} G_{n+1}\left(\dfrac{\alpha_{j,n+1}}{2T}\right),$$
where $\alpha_{j,n+1}:=\max\left\{\dfrac{1}{\epsilon_0},2d_{j,n}\right\}.$ In other words, there is $\alpha_{j,n+1}>0$ such that \eqref{eq24} holds for  $n+1.$ By the principle of induction we have that \eqref{eq24} is verified for all $n\in \mathbb{N}^*$, showing  Theorem \ref{teo2}.\qed

\appendix
\section{Well-posedness \textit{via} Semigroup theory} \label{sec2}
 This section is devoted to proving that the system \eqref{eq8} is well-posed in the energy space $\mathcal{H}_j$.  To do that, first, let us present some properties of $\mathcal{A}_j$, defined by \eqref{eq9}-\eqref{eq9a} and its adjoin $\mathcal{A}^*_j$ defined by
 \begin{equation}
\label{eq15}
\mathcal{A}_j^*(V)=\left(
\begin{array}{c}
-i\Delta v+i\Delta^2 v+(-1)^{j}\int_0^\infty g(s)\Delta^j\zeta^t(\cdot,s)ds\\
\\
\zeta^t_s+\dfrac{g'(s)}{g(s)}\zeta^t-v
\end{array}\right)
\end{equation}
with
\begin{equation}
\label{eq16}D(\mathcal{A}_j^*)=\{V=(v,\zeta^t)\in \mathcal{H}_j; \mathcal{A^*}_j(V)\in\mathcal{H}_j, v\in H^2_0(\Omega), \zeta^t(x,0)=0 \},
\end{equation}
for $j\in \{0,1,2\}$.  So,  our first result in this section ensures that  $\mathcal{A}_j$ (resp.  $\mathcal{A}^*_j$) is dissipative,  and $D(\mathcal{A}_j)$ (resp.  $D(\mathcal{A}^*_j)$) is dense in the energy space\footnote{Now on,  we will use the following Poincar\'e inequality 
$\|y\|^2\leq c_*\|\nabla y\|^2, \  \ y\in H_0^1(\Omega),$
where $c_*>0$ is the Poincar\'e constant. }. 
\begin{lemma}\label{lem301}
$\mathcal{A}_j$ and $\mathcal{A}^*_j$ are dissipative.  Moreover,  $D(\mathcal{A}_j)$ and $D(\mathcal{A}^*_j)$ are dense in $\mathcal{H}_j,$ for $j\in\{0,1,2\}$.
\end{lemma}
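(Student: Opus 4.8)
The plan is to establish the two assertions independently: the dissipativity by a direct energy computation on the respective domains, and the density by exhibiting a single class of smooth compactly supported pairs contained in both $D(\mathcal{A}_j)$ and $D(\mathcal{A}_j^*)$ and showing it is dense in $\mathcal{H}_j$. For the first part, take $U=(y,\eta^t)\in D(\mathcal{A}_j)$, so $y\in H_0^2(\Omega)$ (whence $y=\nabla y=0$ on $\Gamma$) and $\mathcal{A}_jU\in\mathcal{H}_j$. Pairing the first component of $\mathcal{A}_jU$ with $y$, the terms $\langle i\Delta y,y\rangle$ and $\langle -i\Delta^2 y,y\rangle$ vanish, since after integrating by parts on $\Omega$ they equal $\re(-i\|\nabla y\|^2)$ and $\re(i\|\Delta y\|^2)$. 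Moving $\Delta^j$ off $\eta^t$ and onto $\bar y$ (again the boundary contributions drop because $y\in H_0^2(\Omega)$) and using the definition of $\langle\cdot,\cdot\rangle_{H_j}$, the memory term of the first component paired with $y$ equals $-\langle y,\eta^t\rangle_{L_j}$ — the sign working out in each of $j=0,1,2$ — which cancels the term $+\langle y,\eta^t\rangle_{L_j}$ produced by $\langle y-\eta^t_s,\eta^t\rangle_{L_j}$. What remains is $-\langle \eta^t_s,\eta^t\rangle_{L_j}=-\tfrac12\int_0^\infty g(s)\,\partial_s\|\Delta^\frac{j}{2}\eta^t(s)\|^2\,ds$; integrating by parts in $s$, with $\eta^t(\cdot,0)=0$ and $g(s)\|\Delta^\frac{j}{2}\eta^t(s)\|^2\to 0$ as $s\to\infty$, gives
\[
\langle \mathcal{A}_jU,U\rangle_{\mathcal{H}_j}=\dfrac12\int_0^\infty g'(s)\|\Delta^\frac{j}{2}\eta^t(s)\|^2\,ds\le 0,
\]
since $g'\le 0$ by \eqref{eq4}; this recovers the identity \eqref{eq28}.

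For $V=(v,\zeta^t)\in D(\mathcal{A}_j^*)$ the same manipulations apply: the first component pairs with $v$ to give $+\langle v,\zeta^t\rangle_{L_j}$, which is cancelled by the $-\langle v,\zeta^t\rangle_{L_j}$ in the second component, and the only genuinely new contribution is $\langle\frac{g'}{g}\zeta^t,\zeta^t\rangle_{L_j}=\int_0^\infty g'(s)\|\Delta^\frac{j}{2}\zeta^t(s)\|^2\,ds$, which added to $\langle\zeta^t_s,\zeta^t\rangle_{L_j}=-\tfrac12\int_0^\infty g'(s)\|\Delta^\frac{j}{2}\zeta^t(s)\|^2\,ds$ leaves
\[
\langle \mathcal{A}_j^*V,V\rangle_{\mathcal{H}_j}=\dfrac12\int_0^\infty g'(s)\|\Delta^\frac{j}{2}\zeta^t(s)\|^2\,ds\le 0.
\]
The vanishing of the boundary term at $s=\infty$ used above is not an extra hypothesis: from $\mathcal{A}_jU\in\mathcal{H}_j$ one extracts $\eta^t_s\in L_j$, and combining this with $\eta^t\in L_j$ and $|g'|\le c_0g$ shows $\frac{d}{ds}\big(g(s)\|\Delta^\frac{j}{2}\eta^t(s)\|^2\big)\in L^1(\R_+)$, so $g(s)\|\Delta^\frac{j}{2}\eta^t(s)\|^2$ converges as $s\to\infty$ and the limit must be $0$ because $\eta^t\in L_j$; likewise for $\zeta^t$.

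For the density, let $\mathcal{S}_j$ be the set of pairs $(y,\eta^t)$ with $y\in C_c^\infty(\Omega)$ and $\eta^t=\sum_{k}\phi_k(s)\psi_k(x)$ a finite sum with $\phi_k\in C_c^\infty\big((0,\infty)\big)$ and $\psi_k\in C_c^\infty(\Omega)$. Each such pair lies in $D(\mathcal{A}_j)\cap D(\mathcal{A}_j^*)$: indeed $y\in C_c^\infty(\Omega)\subset H_0^2(\Omega)$, $\eta^t(\cdot,0)=0$ since each $\phi_k$ vanishes near $0$, and every summand of $\mathcal{A}_jU$ and of $\mathcal{A}_j^*V$ is an element of $C_c^\infty(\Omega)$ times an $s$-factor lying among $\int_0^\infty g\phi_k\,ds,\ \phi_k,\ \phi_k',\ \frac{g'}{g}\phi_k$, each square-integrable against $g(s)\,ds$ — for the last one because $(g')^2/g\le c_0^2g$ by \eqref{eq4}, so $\int_0^\infty g\,|\frac{g'}{g}\phi_k|^2\,ds\le c_0^2 g_0\|\phi_k\|_{L^\infty}^2<\infty$; hence $\mathcal{A}_jU,\mathcal{A}_j^*V\in\mathcal{H}_j$. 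It then remains to show $\mathcal{S}_j$ is dense in $\mathcal{H}_j=L^2(\Omega)\times L^2_g(\R_+;H_j)$: $C_c^\infty(\Omega)$ is dense in $L^2(\Omega)$; $g(s)\,ds$ is a finite Borel measure with $g>0$ continuous, so $\{0\}$ is null and $C_c^\infty\big((0,\infty)\big)$ is dense in $L^2_g(\R_+)$; $C_c^\infty(\Omega)$ is dense in each of $H_0=L^2(\Omega)$, $H_1=H_0^1(\Omega)$, $H_2=H_0^2(\Omega)$; and a standard simple-function/tensor-product argument then shows the finite sums $\sum_k\phi_k\otimes\psi_k$ are dense in $L^2_g(\R_+;H_j)$. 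Consequently $\overline{D(\mathcal{A}_j)}=\overline{D(\mathcal{A}_j^*)}=\mathcal{H}_j$.

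The computations are otherwise mechanical once the pairings are organized; the two points that require care are the justification that the $s=\infty$ boundary term in the integration by parts vanishes — handled above using the domain membership together with \eqref{eq4} — and the verification that the apparently singular coefficient $g'/g$ appearing in $\mathcal{A}_j^*$ does not obstruct the inclusion $\mathcal{S}_j\subset D(\mathcal{A}_j^*)$, which is precisely where the condition $-g'\le c_0 g$ is used.
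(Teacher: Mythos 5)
Your dissipativity computation is essentially the paper's own: the same pairing, the same cancellation of the cross terms between the memory integral in the first component and the $y$-term in the second, and the same integration by parts in $s$ leading to $\tfrac12\int_0^\infty g'(s)\|\Delta^{\frac{j}{2}}\eta^t\|^2\,ds\le 0$ (the paper's \eqref{eq10}); you are in fact more careful than the paper in justifying that the boundary term at $s=\infty$ vanishes, which the paper passes over in silence, and your argument for it (domain membership gives $\eta^t_s\in L_j$, whence $g\|\Delta^{\frac{j}{2}}\eta^t\|^2$ has an integrable derivative and so tends to $0$) is sound. The density argument, however, is a genuinely different route. The paper never exhibits a dense subset of the domain; it invokes the standard fact that a dissipative operator on a reflexive space is densely defined once $I-\mathcal{A}_j$ is surjective, and proves surjectivity by solving the resolvent system explicitly: the second equation is an ODE in $s$ giving $\eta^t=(1-e^{-s})y+\int_0^s e^{\tau-s}f_2(\tau)\,d\tau$, and substitution reduces the first equation to an elliptic problem for $y$ handled by Lax--Milgram. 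Your route --- smooth tensor products $\sum_k\phi_k(s)\psi_k(x)$ with $\phi_k\in C_c^\infty((0,\infty))$ and $\psi_k\in C_c^\infty(\Omega)$, checked to lie in $D(\mathcal{A}_j)\cap D(\mathcal{A}_j^*)$ (the only delicate point being $(g')^2/g\le c_0^2 g$ from \eqref{eq4} for the adjoint) and dense in $\mathcal{H}_j$ by the usual Bochner-space approximation --- is more elementary and proves the lemma exactly as stated, without needing reflexivity or the range condition. What it does not deliver is $\mathrm{Ran}(I-\mathcal{A}_j)=\mathcal{H}_j$, which the paper needs anyway to apply Lumer--Phillips in Theorem \ref{teo1}; so the paper's detour through the resolvent equation does double duty, whereas under your approach that surjectivity step would still have to be carried out separately for the generation result.
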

\begin{proof}
Indeed,  let $(y,\eta^t)\in D(\mathcal{A}_j)$ so
$$\langle \mathcal{A}_j(y,\eta^t),( y,\eta^t)\rangle=-\re\left(\int_0^\infty g(s)\int_\Omega\Delta^\frac{j}{2}  \eta^t_s\Delta^\frac{j}{2} \overline{\eta^t}dxds\right). $$
As
$$\Delta^\frac{j}{2} \eta^t_s\Delta^\frac{j}{2} \overline{\eta^t}=\dfrac{1}{2}(|\Delta^\frac{j}{2} \eta^t|^2)_s+i\im(\Delta^\frac{j}{2} \eta^t_s\Delta^\frac{j}{2} \overline{\eta^t}),$$
integration by parts over variable $s,$ ensures that
\begin{equation}
\label{eq10}
\begin{split}
\langle \mathcal{A}_j(y,\eta^t),( y,\eta^t)\rangle=&-\re\left(\int_0^\infty g(s)\int_\Omega \left(\dfrac{1}{2}(|\Delta^\frac{j}{2} \eta^t|^2)_s+i\im(\Delta^\frac{j}{2} \eta^t_s\Delta^\frac{j}{2} \overline{\eta^t})\right)dxds\right)\\
=&\dfrac{1}{2}\re\left( \int_0^\infty g'(s)\int_\Omega |\Delta^\frac{j}{2} \eta^t|^2dxds\right)\\
=&\dfrac{1}{2}\int_0^\infty g'(s)\|\Delta^\frac{j}{2} \eta^t\|^2ds\leq 0,\\
\end{split}
\end{equation}
since \eqref{eq4} is verified.  So,  $\mathcal{A}_j$ is dissipative.  Similarly,  $\mathcal{A}_j^*$ defined by \eqref{eq15} is dissipative. 

Now, let us prove that $D(\mathcal{A}_j)$ is dense on $\mathcal{H}_j.$  Since we showed that $\mathcal{A}_j$ is dissipative,  we need to prove that the image of $I-\mathcal{A}_j$  is  $\mathcal{H}_j,$ since $\mathcal{H}_j$ is reflexive. To do that, pick $(f_1,f_2)\in \mathcal{H}_j=L^2(\Omega)\times L^2_g(\R_+;H_0^j(\Omega))$,  we claim that there exists $(y,\eta^t)\in D(\mathcal{A}_j)$ such that $$(y,\eta^t)-(i\Delta y-i\Delta^2y+(-1)^{j+1}\int_0^\infty g(s)\Delta^j\eta^t(\cdot, s)ds, y-\eta_s^t)=(f_1,f_2).$$ Or equivalently,  we claim that there exits $(y,\eta^t)\in D(\mathcal{A}_j)$ satisfying
\begin{equation}
\label{eq11}
\left\{\begin{array}{l}
{\displaystyle y-i\Delta y+i\Delta^2y+(-1)^j\int_0^\infty g(s)\Delta^j\eta^t(\cdot, s)ds=f_1}\\
\eta^t-y+\eta_s^t=f_2.
\end{array}\right.
\end{equation}

Indeed,  multiplying the second equation of \eqref{eq11} by $e^s$ and integrating over $s,$ we get 
\begin{equation}
\label{eq12}
\begin{array}{rcl}
{\displaystyle \eta^t(x,s)}&=&{\displaystyle (1- e^{-s})y+\int_0^s e^{\tau-s} f_2(\tau)d\tau=(1- e^{-s})y+ f_3(s)}.
\end{array}
\end{equation}
Since $f_2\in L_g^2(\R_+;H_0^j(\Omega)),$ taking ${\displaystyle f_3=\int_0^s e^{\tau-s} f_2(\tau)d\tau}$ we have
\begin{equation*}
\begin{split}
\int_0^\infty g(s)\|\Delta^\frac{j}{2} f_3(s)\|^2ds=&\int_0^\infty g(s)e^{-2s}\int_{\Omega}\left|\int_0^s e^{\tau} \Delta^\frac{j}{2} f_2(\tau)d\tau\right|^2 dxds\\
\leq& \int_0^\infty g(s)e^{-s}\int_{\Omega}\int_0^s e^{\tau} |\Delta^\frac{j}{2} f_2(\tau)|^2d\tau dxds\\
\leq& \int_0^\infty \int_0^s g(s)e^{-s} e^{\tau} \|\Delta^\frac{j}{2} f_2(\tau)\|^2 d\tau ds\\
=&\int_0^\infty \int_\tau^\infty g(s)e^{-s} e^{\tau} \|\Delta^\frac{j}{2} f_2(\tau)\|^2  dsd\tau\\
\leq& \int_0^\infty \int_\tau^\infty g(\tau)e^{-s} e^{\tau} \|\Delta^\frac{j}{2} f_2(\tau)\|^2  dsd\tau\\
=& \| f_2\|_{L_g^2(\R_+;H_0^j(\Omega))}^2<+\infty,
\end{split}
\end{equation*}
that is, $f_3\in L_g^2(\R_+;H_0^j(\Omega)).$ Now, for $y\in H^2_0(\Omega)$  holds that 
$$\int_0^\infty g(s)\|(1-e^{-s})\Delta^\frac{j}{2} y\|^2ds=\|\Delta^\frac{j}{2} y\|^2\int_0^\infty g(s)(1-e^{-s})^2ds\leq\|\Delta^\frac{j}{2} y\|^2g_1<+\infty,$$
since
$$g_1:=\int_0^\infty g(s)(1-e^{-s})ds\leq \int_0^\infty g(s)ds=g_0.$$
So $(1-e^{-s})y\in L_g^2(\R_+;H_0^j(\Omega)).$ Therefore,  for $y\in H_0^2(\Omega)$, choosing $\eta^t$ as in \eqref{eq12},  follows that $\eta^t\in  L_g^2(\R_+;H_0^j(\Omega))$ and, so $ \eta^t(x,0)=0$. Thanks to \eqref{eq11} we get $$\eta^t_s=f_2-\eta^t+y\in  L_g^2(\R_+;H_0^j(\Omega)).$$

Finally, let us prove that $y\in H_0^2(\Omega)$ satisfies
\begin{equation}
\label{eq13}
y-i\Delta y+i\Delta^2y+(-1)^j\int_0^\infty g(s)\Delta^j \eta^t(\cdot, s)ds=f_1,
\end{equation}
for $\eta^t=(1-e^{-s})y+f_3.$ This is equivalent to obtain $y\in H_0^2(\Omega)$ satisfying the following elliptic equation
\begin{equation}
\label{eq14}
y-i\Delta y+i\Delta^2y+(-1)^j g_1\Delta^j y=f_1-(-1)^j\int_0^\infty g(s)\Delta^j f_3(\cdot, s)ds, 
\end{equation}
which is a direct consequence of the Lax-Milgram theorem.  Therefore,  $(y,\eta^t)\in D(\mathcal{A}_j)$ is a strong solution of $(I-\mathcal{A}_j)(y,\eta^t)=(f_1,f_2)$ and 
$I-\mathcal{A}_j$ is surjective,  showing the result.  Similarly, it is shown that  $D(\mathcal{A}_j^*)$ defined by \eqref{eq16} is dense in $\mathcal{H}_j.$
\end{proof}
%\begin{remark}
%It is important to point out that thanks to condition \eqref{eq4},  we have 
%$$\left|\int_0^\infty g'(s)\|\Delta^\frac{j}{2}\eta^t\|^2ds\right|= -\int_0^\infty g'(s)\|\Delta^\frac{j}{2}\eta^t\|^2ds\leq c_0\int_0^\infty g(s)\|\Delta^\frac{j}{2}\eta^t\|^2ds=c_0\|\eta^t\|_{L_j}^2<\infty,
%$$
%for all $\eta^t\in L_j$, so \eqref{eq10} is well-defined.
%\end{remark}

The main result of this section is a consequence of the Lemma \ref{lem301} and can be read as follows.
\begin{theorem}
\label{teo1}
Suppose that Assumption \ref{R1} and \eqref{eq3} are verified.  Thus,  for each $j\in\{0,1,2\},$ the linear operator $\mathcal{A}_j$ defined by \eqref{eq9} is the infinitesimal generator of a semigroup of class $C_0$ and, for each $n\in\mathbb{N}$ and $U_0\in D(\mathcal{A}_j^n),$ the system \eqref{eq8} has unique solution in the class $U\in \bigcap_{k=0}^{n}C^k(\R_+;D(\mathcal{A}^{n-k}_j)).$
\end{theorem}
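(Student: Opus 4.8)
The plan is to deduce the generation statement from the Lumer--Phillips theorem and then obtain the regularity by a routine induction on $n$. By Lemma~\ref{lem301}, for each $j\in\{0,1,2\}$ the operator $\mathcal{A}_j$ is densely defined and dissipative on the (reflexive) Hilbert space $\mathcal{H}_j$, and the surjectivity argument carried out in that proof shows that $I-\mathcal{A}_j$ maps $D(\mathcal{A}_j)$ onto all of $\mathcal{H}_j$; equivalently, since $\mathcal{A}_j^*$ was also shown to be densely defined and dissipative, one may invoke the reflexive-space form of Lumer--Phillips. In either case $\mathcal{A}_j$ is the infinitesimal generator of a $C_0$ semigroup of contractions $\{S_j(t)\}_{t\ge 0}$ on $\mathcal{H}_j$. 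In particular, for $U_0\in D(\mathcal{A}_j)$ the curve $U(t):=S_j(t)U_0$ is the unique function in $C^1(\R_+;\mathcal{H}_j)\cap C^0(\R_+;D(\mathcal{A}_j))$ solving \eqref{eq8}, and it satisfies $\partial_t U(t)=\mathcal{A}_j U(t)=S_j(t)\mathcal{A}_j U_0$.

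For the higher-order regularity I would argue by induction on $n$, using two standard facts about $C_0$ semigroups: $D(\mathcal{A}_j^{k})$ is invariant under $S_j(t)$, and $\mathcal{A}_j^{k}S_j(t)x=S_j(t)\mathcal{A}_j^{k}x$ for $x\in D(\mathcal{A}_j^{k})$. The case $n=0$ is the continuity $t\mapsto S_j(t)U_0$, and $n=1$ is recalled above. Assume the statement for $n-1$ and let $U_0\in D(\mathcal{A}_j^{n})$. Then $\mathcal{A}_j U_0\in D(\mathcal{A}_j^{n-1})$, so by the induction hypothesis $t\mapsto S_j(t)\mathcal{A}_j U_0$ lies in $\bigcap_{k=0}^{n-1}C^k(\R_+;D(\mathcal{A}_j^{\,n-1-k}))$; since $\partial_t[S_j(t)U_0]=S_j(t)\mathcal{A}_j U_0$ and $S_j(\cdot)U_0\in C^0(\R_+;D(\mathcal{A}_j^{n}))$ by invariance, differentiating once and shifting the index gives $U=S_j(\cdot)U_0\in\bigcap_{k=0}^{n}C^k(\R_+;D(\mathcal{A}_j^{\,n-k}))$. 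Uniqueness in this class is inherited from uniqueness in $C^1(\R_+;\mathcal{H}_j)$, which is the classical uniqueness for the abstract Cauchy problem associated with a generator.

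The only point that is not completely automatic is the verification of the hypotheses of Lumer--Phillips, and this has essentially been carried out already in Lemma~\ref{lem301}: dissipativity of $\mathcal{A}_j$ (and of $\mathcal{A}_j^*$) via the identity \eqref{eq10}, density of the domains, and surjectivity of $I-\mathcal{A}_j$ via the splitting \eqref{eq12} for $\eta^t$ together with the Lax--Milgram resolution of the elliptic problem \eqref{eq14} for $y\in H_0^2(\Omega)$. Thus the main (and essentially only) obstacle is bookkeeping: checking that the $y$ produced by Lax--Milgram is regular enough that $\mathcal{A}_j(y,\eta^t)$ genuinely belongs to $\mathcal{H}_j$, i.e.\ that $(y,\eta^t)\in D(\mathcal{A}_j)$ in the sense of \eqref{eq9a}, and that the pair $(y,\eta^t)$ with $\eta^t$ given by \eqref{eq12} satisfies both equations of \eqref{eq11}. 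Once this is in place, the theorem follows from the standard $C_0$-semigroup machinery.
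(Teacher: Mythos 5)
Your proposal is correct and follows essentially the same route the paper intends: the paper gives no separate proof of Theorem~\ref{teo1} beyond stating that it is a consequence of Lemma~\ref{lem301}, i.e.\ generation via Lumer--Phillips from the density, dissipativity, and surjectivity of $I-\mathcal{A}_j$ established there, with the higher-order regularity coming from the standard semigroup calculus you describe. Your explicit induction on $n$ and the remark about verifying that the Lax--Milgram solution genuinely lands in $D(\mathcal{A}_j)$ simply fill in details the paper leaves implicit.
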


\subsection*{Acknowledgments} 
%{\color{brown} The authors are grateful to the editor and the two anonymous reviewers for their constructive comments and valuable remarks.}
Capistrano--Filho was supported by grants numbers CNPq 307808/2021-1,   401003/2022-1 and 200386/2022-0, CAPES 88881.311964/2018-01 and 88881.520205/2020-01,  and MATHAMSUD 21-MATH-03.  This work is part of the Ph.D. thesis of de Jesus at the Department of Mathematics of the Federal University of Pernambuco.

%\subsection*{Declaration of interests}
%The authors declare that they have no known competing financial interests or personal relationships that could have appeared to influence the work reported in this paper

\end{document}